\documentclass{article}

\usepackage{amsmath,amssymb,amsthm}
\usepackage{geometry}
\usepackage{hyperref}
\usepackage[T1]{fontenc}
\usepackage{indentfirst}

\newtheorem{theorem}{Theorem}
\newtheorem{lemma}{Lemma}
\newtheorem{corollary}{Corollary}
\newtheorem{conjecture}{Conjecture}

\begin{document}

\title{Normal Behavior and Periodic Points of a Pseudo-Aliquot Map Associated with the Dedekind Psi Function}

\author{Aimin Guo}

\date{}

\maketitle
\vspace{-2.0em}

\begin{center}
\small

\textit{School of Mathematics and Statistics}\\
\textit{Anhui Normal University}\\
\textit{Wuhu 241002, Anhui, P.~R.~China}

\vspace{0.4em}

\texttt{18339681864@163.com}

\end{center}

\begin{abstract}
Let $\psi$ denote the Dedekind psi function, and define
\[
T(n)=\psi(n)-n.
\]
We prove that
\[
\frac{T^{(2)}(n)}{T(n)}
=
\frac{T(n)}{n}+o(1)
\]
for almost all positive integers $n$. More generally, for every fixed
positive integer $k$, we show that
\[
\frac{T^{(j+1)}(n)}{T^{(j)}(n)}
\ge
(1-o(1))\frac{T(n)}{n},
\qquad 0\le j<k,
\]
simultaneously for almost all $n$.

As an application, for every fixed positive integer $\ell$, we prove
that
\[
\#\{n\le x:T^{(\ell)}(n)=n\}=o(x).
\]
In particular, the set of periodic points of $T$ of any prescribed
fixed exact period has asymptotic density zero.

The proofs rely on the persistence of small prime divisors through
finitely many iterations and on estimates for large prime divisors
introduced in the passage from $n$ to $T(n)$.
\end{abstract}

\noindent\textbf{Keywords:}
Dedekind psi function, pseudo-aliquot sequences, iterates,
normal behavior, periodic points, asymptotic density.

\noindent\textbf{Mathematics Subject Classification (2020):}
11A25; 11N37, 11N60.

\section{Introduction}

The Dedekind psi function is defined by
\[
\psi(n)
=
n\prod_{p\mid n}\left(1+\frac1p\right).
\]
In this paper we study the iterates of the arithmetic function
\[
T(n):=\psi(n)-n.
\]
We put
\[
T^{(0)}(n)=n
\]
and define recursively
\[
T^{(j+1)}(n)
=
T\bigl(T^{(j)}(n)\bigr)
\]
whenever $T^{(j)}(n)$ is a positive integer. Since $T(1)=0$, an
orbit may in principle terminate at $0$. This causes no difficulty
for the almost-all results considered below: for every prescribed
fixed number of iterations, the relevant iterates are greater than
$1$ outside a set of asymptotic density zero.

Our main objective is to understand the normal behavior of the first
finitely many iterates of $T$ for almost all positive integers $n$.

The map $T$ arises naturally in the study of pseudo-aliquot
sequences. In 1993, Penney and Pomerance introduced
\[
t(n)
=
\sum_{\substack{d\mid n,\ d<n\\ n/d\ {\rm squarefree}}}d
\]
and asked whether there exists an integer whose forward orbit
\[
n,\ t(n),\ t^{(2)}(n),\ldots
\]
is unbounded \cite{PenneyPomerance1993}. In 1996, Brown and
Vanden Eynden answered this question affirmatively
\cite{BrownVandenEynden1996}. They observed that
\[
t(n)
=
n\left(
\prod_{p\mid n}\left(1+\frac1p\right)-1
\right)
=
\psi(n)-n.
\]
Thus the pseudo-aliquot map considered in these works is precisely
the function $T$ studied here.

Brown and Vanden Eynden thus settled the existence question posed by
Penney and Pomerance. What remains largely unexplored, however, is
the typical behavior of the pseudo-aliquot map over a fixed number of
iterations. In particular, we are not aware of a corresponding
almost-all analysis of the successive ratios
\[
\frac{T^{(j+1)}(n)}{T^{(j)}(n)}.
\]
The present paper addresses this gap by developing quantitative
estimates for these ratios over any prescribed fixed number of
iterations.

There is a natural comparison with the classical aliquot map
\[
s(n)=\sigma(n)-n,
\]
where $\sigma$ denotes the sum-of-divisors function. In 1976,
Erd\H{o}s investigated the almost-all behavior of the first finitely
many iterates of $s$ \cite{Erdos1976}. In particular, his work
established substantial fixed-iterate control in terms of the initial
ratio $s(n)/n$. In 2009, Kobayashi, Pollack, and Pomerance studied
the statistical distribution of sociable numbers and periodic aliquot
orbits, making explicit use of fixed-iterate estimates in this setting
\cite{KobayashiPollackPomerance2009}.

A particularly close analogue is provided by the cototient map
\[
u(n):=n-\varphi(n),
\]
where $\varphi$ denotes Euler's totient function. Luca and Pomerance
obtained a second-iterate stability result for $u$ analogous to our
first theorem \cite{ref8}. Nevertheless, the pseudo-aliquot map $T$
is not decreasing, and its fixed iterates require a separate analysis
of how prime divisors propagate under iteration. This also leads to
periodic-point consequences that have no analogue in the strictly
decreasing cototient setting.

Our first result shows that the relative growth factor is
asymptotically preserved at the second step. More precisely, we prove
that
\[
\frac{T^{(2)}(n)}{T(n)}
=
\frac{T(n)}{n}+o(1)
\]
for almost all positive integers $n$.

For $r\ge1$, we write
\[
\log_1x=\log x,
\qquad
\log_{r+1}x=\log(\log_r x).
\]
More quantitatively, on a set of asymptotic density one,
\[
\left|
\frac{T^{(2)}(n)}{T(n)}
-
\frac{T(n)}{n}
\right|
\ll
\frac{(\log_3n)^2}{\log_2n}.
\]
We do not attempt to optimize this error term. The main point is that
the relative growth factor governing the first step is typically
preserved, up to a quantity tending to zero, at the second step.

We next obtain a finite-iterate extension. For every fixed positive
integer $k$, we prove that, for almost all $n$ and simultaneously for
all $0\le j<k$,
\[
\frac{T^{(j+1)}(n)}{T^{(j)}(n)}
\ge
(1-o(1))\frac{T(n)}{n},
\]
where the $o(1)$ term may depend on $k$ and is uniform in
$0\le j<k$. Consequently, for every fixed positive integer $j$,
\[
T^{(j)}(n)
\ge
(1-o(1))\,n
\left(\frac{T(n)}{n}\right)^j
\]
for almost all positive integers $n$. Thus, over any prescribed fixed
number of iterations, the successive relative growth factors cannot
typically fall substantially below the initial one.

The proofs rely on two complementary arithmetic mechanisms. The first
is the persistence of small prime divisors under iteration. Using an
Erd\H{o}s-type prime-chain argument, we show that, for any prescribed
finite set of primes and any fixed number of iterations, the
corresponding divisibility information is typically preserved. In
particular, for a suitable fixed modulus $M$ one obtains congruences
of the form
\[
T^{(j)}(n)
\equiv
(-1)^jn
\pmod M
\]
through a prescribed fixed number of iterates.

This persistence interacts directly with the Euler product
\[
\frac{\psi(m)}m
=
\prod_{p\mid m}\left(1+\frac1p\right).
\]
Hence the small-prime factors occurring in the normalized values
$\psi(T^{(j)}(n))/T^{(j)}(n)$ are typically the same as those
occurring in $\psi(n)/n$. Since every factor $1+1/p$ is greater
than $1$, this structure is particularly well suited to propagating
lower bounds through finitely many iterates.

The second mechanism controls large prime divisors introduced in the
passage from $n$ to $T(n)$. If
\[
n=Pm,
\]
where $P$ is a prime with $P\nmid m$, then
\[
T(Pm)
=
P\,T(m)+\psi(m).
\]
Thus, under the relevant coprimality conditions, a divisibility
condition on $T(n)$ can be converted into a residue-class condition
on the large prime factor $P$. This allows the contribution of newly
introduced large prime divisors to be estimated by results on primes
in arithmetic progressions, in particular the Brun--Titchmarsh
inequality. Together, the persistence of small prime divisors and the
control of newly introduced large primes form the basis of our
normal-order estimates.

We finally give an application to periodic points. A positive integer
$n$ is called a periodic point of $T$ if
\[
T^{(r)}(n)=n
\]
for some positive integer $r$ for which the relevant iterates are
defined. The least such $r$ is called the exact period of $n$. If
$r$ is the exact period, the finite set
\[
\{n,T(n),\ldots,T^{(r-1)}(n)\}
\]
is called the periodic orbit of $n$.

For every fixed positive integer $\ell$, we prove that
\[
\#\{n\le x:T^{(\ell)}(n)=n\}=o(x).
\]
Thus the periodic points whose exact period divides $\ell$ form a set
of asymptotic density zero; in particular, the same holds for points
of exact period $\ell$. This illustrates how the normal behavior of
finite iterates can be used to obtain nontrivial information about the
dynamics of the pseudo-aliquot map.

For $\ell=2$, the result applies in particular to the
$\psi$-amicable numbers studied by Dimitrov
\cite{Dimitrov2025}. A $\psi$-amicable pair $a,b$ satisfies
\[
T(a)=b,\qquad T(b)=a,
\]
and hence forms an exact two-cycle of $T$. Our result places this
density-zero phenomenon in the broader setting of periodic points of
arbitrary prescribed fixed period.

In the periodic-point application we also use the fact that
$\psi(n)/n$ possesses a continuous limiting distribution; see, for
example, \cite[Chapter~III.4, Theorem~1]{ref13}.

\medskip
\noindent\textbf{Notation.}
Throughout the paper, $p$, $q$, $P$, and $Q$ denote prime numbers,
and $\gamma$ denotes the Euler--Mascheroni constant. For a finite set
$A$, the notation $\#A$ denotes its cardinality. We write $\pi(x)$
for the number of primes not exceeding $x$.

The symbols $O$ and $\ll$ have their usual meanings; thus
$f\ll g$ means $f=O(g)$. We write
\[
f\asymp g
\]
if both $f\ll g$ and $g\ll f$ hold. Unless otherwise specified,
all implied constants are absolute. We write
\[
p^a\parallel n
\]
if
\[
p^a\mid n
\qquad\text{and}\qquad
p^{a+1}\nmid n.
\]

All expressions involving iterated logarithms are understood for
sufficiently large arguments. Their values at the finitely many
remaining positive integers may be defined arbitrarily, since this
does not affect any asymptotic-density statement.

For a set $A$ of positive integers, write
\[
\underline d(A)
:=
\liminf_{x\to\infty}
\frac{1}{x}\#\{n\le x:n\in A\},
\]
and
\[
\overline d(A)
:=
\limsup_{x\to\infty}
\frac{1}{x}\#\{n\le x:n\in A\}.
\]
If these two quantities are equal, their common value is denoted by
$d(A)$ and is called the asymptotic density of $A$.
The phrase ``for almost all positive integers'' means outside a set
of asymptotic density zero.

Whenever a statement involves a successive ratio
\[
\frac{T^{(j+1)}(n)}{T^{(j)}(n)},
\]
integers for which one of the relevant iterates is undefined or the
denominator is zero are understood to belong to the exceptional set.

\section{Main Results}

We first state the asymptotic result for the second iterate.

\begin{theorem}\label{thm:second}
For almost all positive integers $n$,
\[
\frac{T^{(2)}(n)}{T(n)}
=
\frac{T(n)}{n}+o(1).
\]
More precisely, on a set of positive integers of asymptotic density
$1$,
\[
\left|
\frac{T^{(2)}(n)}{T(n)}
-
\frac{T(n)}{n}
\right|
\ll
\frac{(\log_3 n)^2}{\log_2 n}.
\]
\end{theorem}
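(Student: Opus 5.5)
Since $T^{(2)}(n)/T(n)=\psi(T(n))/T(n)-1$ and $T(n)/n=\psi(n)/n-1$, the statement is equivalent to
\[
\prod_{p\mid T(n)}\Bigl(1+\frac1p\Bigr)=\prod_{p\mid n}\Bigl(1+\frac1p\Bigr)+O\!\left(\frac{(\log_3 n)^2}{\log_2 n}\right)
\]
on a set of density one. The plan is to truncate both Euler products at a parameter $y$ and compare the two pieces separately. I would take $y=y(x)=\log_2 x/(\log_3 x)^2$ or a similar slowly growing quantity, chosen so that the tail contributions are $\ll 1/y\cdot(\log_3 x)^{O(1)}$.

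\textbf{Small primes.} I would show that for almost all $n\le x$, every prime $p\le y$ satisfies $p\mid n \iff p\mid T(n)$. The key input is the Erd\H{o}s prime-chain fact: for almost all $n$, $n$ is divisible by some prime $P\equiv -1\pmod{p^{a}}$ for every $p\le y$ and the relevant power $a$. Concretely, $\sum_{P\le x,\,P\equiv-1 \,(M)}1/P\approx\log_2 x/\varphi(M)$, and one needs this to be large when $M=p$ or $M=p^2$, $p\le y$. Since $\psi(n)=\prod_{q^a\parallel n}q^{a-1}(q+1)$, such a $P$ with $P\Vert n$ forces $p\mid\psi(n)$, and indeed $p^{v}\mid\psi(n)$ to a high power. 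Then $T(n)=\psi(n)-n$, so for $p\nmid n$ we get $p\nmid T(n)$. For $p\mid n$ we need $v_p(\psi(n))>v_p(n)$, which holds for almost all $n$ once $v_p(n)$ is bounded; the set of $n$ with a large $p$-power ($p^{b}\mid n$, $b$ large) has small density, summably over $p\le y$. Thus $p\mid T(n)$. The exceptional densities are controlled by a Tur\'an--Kubilius/sieve bound of the shape $\sum_{p\le y}\exp(-c\log_2x/p^2)$, which is $o(1)$ for $y$ this small.

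\textbf{Large primes.} I need $\sum_{p\mid n,\,p>y}1/p$ and $\sum_{p\mid T(n),\,p>y}1/p$ to be $O((\log_3 x)^2/\log_2 x)$-ish on average. Actually only an average bound $\ll 1/y$ (times log factors) is needed, followed by Markov's inequality. For $n$ this is trivial, since $\sum_{n\le x}\sum_{p\mid n,p>y}1/p\le x\sum_{p>y}1/p^2\ll x/y$. For $T(n)$, I would split primes $p$ into the ranges $y<p\le(\log x)^{A}$ and $p$ larger.

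For a large prime $p$, I would bound the number of $n\le x$ with $p\mid T(n)$. Write $n=Pm$ with $P=P(n)$ the largest prime, $P\nmid m$ (true for almost all $n$). Then $T(n)=P\,T(m)+\psi(m)$, so $p\mid T(n)$ puts $P$ in a single residue class mod $p$ when $p\nmid T(m)$. Brun--Titchmarsh then gives $\ll x/(p\log(x/(mp)))$ choices, and summing over $m$ and $p$ yields $\sum_{n\le x}\sum_{p\mid T(n),p>y}1/p\ll x(\log_3x)^{O(1)}/y$ after discarding $n$ with $P(n)$ small, whose density is tiny. The case $p\mid T(m)$ and $p\mid\psi(m)$ requires $p\mid \gcd(m,\psi(m))$, whose count is handled separately since $p\mid m$ and $p\mid\psi(m)$ are rare.

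Combining these, the common small-prime factor $\prod_{p\le y,\,p\mid n}(1+1/p)\le e^{\gamma}\log y$ is bounded by $\ll\log_3 x$. Both tails differ from $1$ by $O(\text{tail sum})$, so the difference is $\ll\log_3 x\cdot(\text{tail})$. Choosing $y$ appropriately gives $(\log_3 n)^2/\log_2 n$ once $n>\sqrt x$.

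\textbf{Main obstacle.} The main obstacle is the large-prime estimate for $T(n)$, especially the medium range $y<p\le(\log x)^A$. There the Brun--Titchmarsh/largest-prime decomposition loses a factor $\log$ in the denominator from $\log(x/(mp))$, which must be recovered by summing over $m$ with $P>m$-type restrictions. My first attempt would be to use $P(n)>x^{1/\log_3 x}$ for almost all $n$, which costs only a $\log_3 x$ factor; that factor is accounted for in the final error.
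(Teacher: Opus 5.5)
Your architecture is exactly the paper's. You truncate at $y$ and show that every prime $p\le y$ divides $\psi(n)$, so that $T(n)\equiv -n\pmod p$. You bound the old large primes by $x\sum_{p>y}p^{-2}$, and you treat the new large primes of $T(n)$ via $n=P(n)m$, $T(Pm)=P\,T(m)+\psi(m)$ and Brun--Titchmarsh. Finally you apply Markov's inequality and $\psi(n)/n\ll\log_3 n$. Two steps, however, fail as written.

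The first is in the small-prime step, where you do not need $v_p(\psi(n))>v_p(n)$. If $p\mid n$ and $p\mid\psi(n)$ then $p\mid\psi(n)-n=T(n)$, and only the \emph{set} of prime divisors enters $\psi(m)/m$. This matters, because the route you give for the valuation condition does not work. For $p$ near $y=\log_2x/(\log_3x)^2$ one has $\log_2x/\varphi(p^2)\approx(\log_3x)^4/\log_2x\to0$, so $\sum_{p\le y}\exp(-c\log_2x/p^2)\gg\pi(y)$ rather than $o(1)$. With $M=p$ alone the exceptional density is $\ll\sum_{p\le y}\exp(-c\log_2x/p)$. This is $o(1)$ for $y\le c'\log_2x/\log_3x$ with $c'$ small, and it is precisely the input the paper imports from Guo's Lemma~2.3. (The paper restricts from the outset to primes $p\nmid n$, since common primes cancel in the Euler-product ratio. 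Doing the same also removes your ``rare'' case, because $p\mid T(m)$ would force $p\mid m\mid n$.)

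The second problem is more serious: your parameters do not give the stated rate. The factor $\log_3x$ lost by using only $P(n)>x^{1/\log_3x}$ is not absorbed. With $P(n)>x^{\eta}$, your Brun--Titchmarsh range bounds the average of the new-prime sum $\mathcal H(n)$ by $\ll1/(\eta\,y\log y)$. Requiring $p\mid\psi(n)$ for all $p\le y$ can only hold for almost all $n$ when $y\ll\log_2x/\log_3x$, so $y\log y\ll\log_2x$. For $\eta=1/\log_3x$ this bound is therefore at least of the order of the threshold $\asymp\log_3x/\log_2x$. The tail must lie below that threshold on a density-one set to produce the final error $\log_3x\cdot\log_3x/\log_2x$, so Markov yields no $o(x)$ exceptional set.

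What your choices actually give is $\omega(n)(\log_3n)^2/\log_2n$ for an arbitrary $\omega\to\infty$, which is weaker than the claimed $\ll$ bound. With your $y=\log_2x/(\log_3x)^2$ you get only $\omega(n)(\log_3n)^3/\log_2n$.

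The paper takes $y=c\log_2n/\log_3n$ and $\eta=\log_4x/(3\log_3x)$, so that $1/\eta=o(\log_3x)$. The average then becomes $\ll\log_3x/(\log_2x\log_4x)$, and Markov leaves $O(x/\log_4x)$ exceptions. De Bruijn still gives $\#\{n\le x:P(n)\le x^{\eta}\}\ll x/(\log_2x)^{5/2}$, which is small enough to absorb the trivial bound $\mathcal H(n)\ll\log_2x$. Alternatively, keep $\eta=1/\log_3x$ with $y\asymp\log_2x/\log_3x$ and recover the lost factor from the fact that $\psi(n)/n$ has bounded mean. Then $\psi(n)/n\le\sqrt{\log_3n}$ off a density-zero set.

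The qualitative $o(1)$ statement does follow from your sketch once the $p^2$ requirement is dropped.
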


Our second result gives a lower bound for any fixed number of
iterations.

\begin{theorem}\label{thm:lower}
Let $k$ be a fixed positive integer. Then, for almost all positive
integers $n$, simultaneously for all integers $0\le j<k$,
\[
\frac{T^{(j+1)}(n)}{T^{(j)}(n)}
\ge
(1-o(1))\frac{T(n)}{n}.
\]
Here the $o(1)$ term may depend on $k$ and is uniform in
$0\le j<k$.

Consequently, for every fixed positive integer $j$,
\[
T^{(j)}(n)
\ge
(1-o(1))\,n
\left(\frac{T(n)}{n}\right)^j
\]
for almost all positive integers $n$.
\end{theorem}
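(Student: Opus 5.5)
Since $T^{(j+1)}(n)/T^{(j)}(n)=\psi(m_j)/m_j-1$ with $m_j=T^{(j)}(n)$, and
\[
\frac{\psi(m)}{m}-1=\prod_{p\mid m}\Bigl(1+\frac1p\Bigr)-1,
\]
it suffices to show that for almost all $n$ the small prime divisors of $n$ (those $p\le y$ for a slowly growing $y=y(n)$) all divide $m_j$ for $0\le j<k$. Monotonicity of the Euler product gives
\[
\frac{\psi(m_j)}{m_j}-1\ \ge\ \prod_{p\mid n,\ p\le y}\Bigl(1+\frac1p\Bigr)-1,
\]
so the lower bound follows once we show that the truncated product is $\frac{\psi(n)}{n}(1+o(1))$ and that $T(n)/n\gg 1$ almost always. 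The latter holds because $T(n)/n\ge 1/P^-(n)$, and in fact $T(n)/n\ge 1/2$ for every even $n$; the limiting distribution of $\psi(n)/n$ is continuous with support in $[1,\infty)$. So almost always $T(n)/n\ge \epsilon$, and any multiplicative $(1+o(1))$ error converts into an additive $o(1)$ and back. The product inequality gives the "Consequently" part.

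\textbf{Tail of the product.} The contribution of primes $p\mid n$ with $p>y$ satisfies $\sum_{p\mid n,p>y}1/p=o(1)$ for almost all $n$, since the mean of this sum is $\sum_{p>y}1/p^2\ll 1/y$. Markov's inequality then gives $\prod_{p\mid n,p>y}(1+1/p)=1+o(1)$ almost always. We take $y=\log_3 x$ or similar.

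\textbf{Persistence of small primes (main step).} Fix $y$ slowly growing. We want: for almost all $n$, every prime $p\le y$ dividing $n$ divides every $m_j$, $j<k$. Following the Erdős prime-chain idea, for almost all $n$ and each $p\le y$ there is a prime $q\parallel n$ with $q\equiv -1\pmod{p^{a}}$ for a suitably large $a$; indeed
\[
\sum_{q\le x,\ q\equiv -1 \ (p^a)}\frac1q\sim\frac{\log_2 x}{\varphi(p^a)}\to\infty,
\]
so the exceptional density is small by a Turán--Kubilius or sieve estimate. Then $p^a\mid \psi(n)$.

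The issue is to show $p$ divides $T(n)=\psi(n)-n$ with $p$ still dividing the next iterate. The cleaner route is a congruence: choose $M=\prod_{p\le y}p^{a_p}$ with exponents large. For almost all $n$ we have $M\mid \psi(n)$ (each $p^{a_p}$ divides $q+1$ for some $q\parallel n$), so $T(n)\equiv -n\pmod M$. Hence $p^b\parallel n$ with $b<a_p$ implies $p^b\parallel T(n)$, and the small-prime part of $T(n)$ is the same as that of $n$. To iterate, we need $M\mid\psi(T(n))$, i.e.\ $T(n)$ again has primes $q\equiv-1\pmod{p^{a_p}}$ dividing it exactly once.

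This is the obstacle, because $T(n)$ is not a typical integer. I would handle it via the large prime decomposition $n=Pm$ with $P=P^+(n)$, where $T(Pm)=P\,T(m)+\psi(m)$. For fixed $m$, the condition that a given prime $q$ divides $T(n)$ becomes a residue class for $P$ modulo $q$. Brun--Titchmarsh then bounds the count of $n\le x$ such that $T(n)$ lacks every prime $q\parallel T(n)$ with $q\equiv-1\pmod{p^{a_p}}$, $q$ in a range $[z,w]$. We pick $z,w$ so that $\sum 1/q$ over such $q$ is large, and the sieve (a fundamental lemma applied to $P$ in residue classes) gives density $\ll \exp(-c\sum 1/q)=o(1)$. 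One must also discard $n$ where $m_j\le 1$ or where $q^2\mid T(n)$, the latter via $\sum_q x/q^2$-type bounds using the same residue-class conversion. Repeating this for $j=1,\dots,k-1$ is done by induction: the density-zero exceptional sets are finitely many, and at each stage the congruence $m_j\equiv(-1)^j n\pmod M$ keeps the small-prime factorization identical to that of $n$. Uniformity in $j<k$ is automatic since $k$ is fixed.
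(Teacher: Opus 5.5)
Your reduction is the right one and agrees with the paper. Since every Euler factor $1+1/p$ exceeds $1$, primes newly created in $T^{(j)}(n)$ can only help. So it suffices to know three things: the primes $p\le y$ dividing $n$ still divide $T^{(j)}(n)$; the tail $\sum_{p\mid n,\,p>y}1/p$ is small; and $R(n)/\rho(n)\le 1+P^-(n)$, which turns the multiplicative loss into a factor $1-o(1)$.

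\textbf{The gap.} It lies in establishing $M\mid\psi(T^{(j)}(n))$ for $j\ge 2$, which is needed as soon as $k\ge 4$. The congruence $T^{(j+1)}(n)\equiv -T^{(j)}(n)\pmod M$ requires $T^{(j)}(n)$ itself to have prime factors $\equiv-1$ modulo the relevant prime powers. Your closing sentence, that ``the congruence $m_j\equiv(-1)^jn\pmod M$ keeps the small-prime factorization identical'', presupposes exactly this, so the induction is circular.

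The decomposition $n=Pm$ can at best give $M\mid\psi(T(n))$. Even there you need a genuine sieve for the linear form $P\,T(m)+\psi(m)$ over primes $P$; Brun--Titchmarsh for single progressions does not suffice. Already at the next step the condition $q\mid T^{(2)}(n)$, with $T^{(2)}(n)=\psi\bigl(P\,T(m)+\psi(m)\bigr)-\bigl(P\,T(m)+\psi(m)\bigr)$, depends on the whole factorization of $T(n)$. It is not a residue-class condition on $P$. ``Repeating'' the argument with $T(n)$ in place of $n$ would require transferring almost-all statements through the map $T$. That is precisely the non-typicality you flagged, and nothing in your sketch controls it. For the aliquot map, the analogous transfer statement is the still-open Erd\H{o}s--Granville--Pomerance--Spiro conjecture.

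\textbf{The missing idea.} All the primes you need already live in $n$. By Erd\H{o}s's lemma (Lemma~\ref{lem:prime-chain}), for fixed $M$ and fixed length, almost all $n$ have prime divisors $q_1,\dots,q_{k+1}$ with $q_1\equiv-1\pmod M$ and $q_{r+1}\equiv-1\pmod{q_r}$. Since $q\mid m$ implies $(q+1)\mid\psi(m)$ for any exponent, your requirement $q\parallel m$ is unnecessary. If $q_r,q_{r+1}\mid T^{(j)}(n)$, then $q_r\mid\psi(T^{(j)}(n))-T^{(j)}(n)=T^{(j+1)}(n)$. So the chain loses one link per iteration, and $q_1\mid T^{(j)}(n)$ for all $j\le k$. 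Hence $M\mid\psi(T^{(j)}(n))$ and $T^{(j)}(n)\equiv(-1)^jn\pmod M$ (Lemma~\ref{lem:congruence-persistence}). No distributional information about $T(n)$ and no large-prime analysis is needed for this theorem.

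\textbf{Secondary corrections.} Because the chain lemma is for fixed $M$, you should take $y$ fixed rather than $y=\log_3x$. Then obtain the $o(1)$ by a diagonal argument over $\varepsilon=1/m$, as the paper does. This also repairs your claim that $T(n)/n\gg1$ almost always, which is false for every fixed constant: the set where $T(n)/n<\epsilon$ has positive density. What holds is $T(n)/n\ge 1/Y$ outside a set of density $\prod_{p\le Y}(1-1/p)$. So the tail threshold $\theta$ must be chosen after $Y$, with $\theta(1+Y)<\varepsilon$.
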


\begin{corollary}\label{cor:periodic}
Let $\ell$ be a fixed positive integer. Then
\[
\#\{n\le x:T^{(\ell)}(n)=n\}=o(x).
\]
In particular, for every fixed $\ell$, the set of periodic points
of $T$ of exact period $\ell$ has asymptotic density zero.
\end{corollary}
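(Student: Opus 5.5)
We deduce Corollary~\ref{cor:periodic} from Theorem~\ref{thm:lower} with $k=\ell$, together with the continuity of the limiting distribution of $\psi(n)/n$. Write $\rho(n)=T(n)/n=\psi(n)/n-1$.

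The key steps are as follows.

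First, Theorem~\ref{thm:lower} with $j=\ell$ gives, for almost all $n$,
\[
T^{(\ell)}(n)\ge(1-\varepsilon_n)\,n\,\rho(n)^{\ell},
\]
where $\varepsilon_n\to0$. If moreover $T^{(\ell)}(n)=n$, then $\rho(n)^{\ell}\le(1-\varepsilon_n)^{-1}$. So $\rho(n)\le 1+\delta$ for any fixed $\delta>0$, once $n$ is large and outside a density-zero set.

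Second, we need the matching upper bound $\rho(n)\ge1-\delta$ on periodic points. For this we use the lower bound for the successive ratios along the cycle, applied to every point of the cycle. Suppose $T^{(\ell)}(n)=n$. Then
\[
\prod_{j<\ell}\frac{T^{(j+1)}(n)}{T^{(j)}(n)}=1.
\]
Each factor is at least $(1-o(1))\rho(n)$. Writing $n_j=T^{(j)}(n)$, the $j$-th factor is exactly $\rho(n_j)$. The conclusion from the product alone is again only $\rho(n)\le 1+o(1)$.

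To obtain $\rho(n)\ge 1-\delta$ we instead use the other points of the orbit. For each $j<\ell$, the point $n_j$ satisfies $T^{(\ell)}(n_j)=n_j$. We would like to apply step one to $n_j$ and get $\rho(n_j)\le1+\delta$. Combined with $\prod_j\rho(n_j)=1$ and $\rho(n_j)\ge(1-o(1))\rho(n)$, this gives, for each $i$,
\[
\rho(n_i)=\prod_{j\ne i}\rho(n_j)^{-1}\ge(1+\delta)^{-(\ell-1)}.
\]
For $i=0$ this yields $\rho(n)\ge(1+\delta)^{-(\ell-1)}$.

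The obstacle is that $n_j$ need not lie outside the exceptional set of Theorem~\ref{thm:lower}. We handle this by counting rather than applying the theorem pointwise. Let $\mathcal E(x)$ be the exceptional set up to $x'$, where $x'$ is a bound for the orbit: on the good set $n_j\le(1+\delta)^{\ell}x$. Since $T^{(\ell-j)}(n_j)=n$, the map $n\mapsto n_j$ is injective on periodic points. Hence the number of periodic $n\le x$ with some $n_j$ exceptional is at most $\ell\cdot\#\mathcal E\bigl((1+\delta)^{\ell}x\bigr)=o(x)$.

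The remaining orbit bound is easy: if $n_j$ is large, the chain of lower bounds from $n_{j'}$ onward, for $j'$ good, forces the cycle to grow beyond its return to $n$.

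Third, all but $o(x)$ of the $\ell$-periodic points $n\le x$ therefore satisfy $|\rho(n)-1|\le C_\ell\delta$, that is, $|\psi(n)/n-2|\le C_\ell\delta$. The limiting distribution of $\psi(n)/n$ is continuous, so it has no atom at $2$. Hence the density of such $n$ tends to $0$ as $\delta\to0$, which gives $o(x)$.

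The main obstacle is the second step: the reverse inequality requires applying Theorem~\ref{thm:lower} at the other orbit points, and so requires controlling the sizes $n_j$ and the injectivity counting. If that fails, a fallback is a direct Erdős-type argument. It would show $\rho(n)\ge 1-\delta$ on cycles via the congruence persistence $T^{(j)}(n)\equiv(-1)^jn\pmod M$. That persistence forces $\psi(n_j)/n_j$ to share the small-prime Euler factors of $\psi(n)/n$, so each $\rho(n_j)$ is essentially $\rho(n)$. Then $\rho(n)^{\ell}\approx1$ directly, giving both bounds.
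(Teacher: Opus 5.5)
Your overall reduction is the right one: show that $\psi(n)/n$ is close to $2$ on most $\ell$-periodic points, then use continuity of the limiting distribution at $2$. However, the orbit-size bound behind your injectivity count is not justified, and for $\ell\ge3$ the argument you sketch for it fails.

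The bound $n_j\le(1+\delta)^{\ell}x$ is circular. Via $n_j=n\prod_{i<j}\rho(n_i)$ it needs $\rho(n_i)\le1+\delta$ at the orbit points, which is exactly what you want to get by applying step one there. What Theorem~\ref{thm:lower} at $n$ alone gives, using $\rho(n_i)\ge(1-\varepsilon)\rho(n)$ and $\prod_{i<\ell}\rho(n_i)=1$, is
\[
n_j=\frac{n}{\prod_{i=j}^{\ell-1}\rho(n_i)}\le\frac{n}{\bigl((1-\varepsilon)\rho(n)\bigr)^{\ell-j}}.
\]
This is your ``overshoot'' argument made precise, and it degenerates as $\rho(n)\to0$. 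Nothing excludes an excursion to height about $n/\rho(n)$ followed by a collapse back to $n$. Only $n_1=\rho(n)n\le(1+\delta)n$ is automatic, which settles $\ell\le2$.

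A uniform lower bound on $\rho$ is not available. For every fixed $c>0$, $\rho(n)<c$ on a set of positive density: take integers with no prime factor $\le Y$ and small tail $H_Y(n)$. Also, Theorem~\ref{thm:lower} gives no rate for its exceptional set, so you cannot afford orbits of height $x\,\omega(x)$ with $\omega(x)\to\infty$ either.

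The fix is a truncation in the spirit of the proof of Theorem~\ref{thm:lower}.
\begin{itemize}
\item Discard the $n$ with no prime factor $\le Y$; these have density $\prod_{p\le Y}(1-1/p)$.
\item For the remaining good $n$ you have $\rho(n)\ge1/Y$, hence $n_j\le\bigl(Y/(1-\varepsilon)\bigr)^{\ell}x$.
\item For fixed $Y$, your injectivity count then costs only $o(x)$, and the rest of your argument goes through.
\item Finally let $\delta\to0$ and $Y\to\infty$.
\end{itemize}

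The paper avoids the issue by applying Theorem~\ref{thm:lower} only at the smallest element $m$ of each orbit.
\begin{itemize}
\item Minimality gives $T(m)\ge m$, i.e.\ $\rho(m)\ge1$, for free.
\item The last ratio satisfies $m/T^{(\ell-1)}(m)\le1$. Combined with Theorem~\ref{thm:lower}, this gives $\rho(m)\le(1-\varepsilon)^{-1}$.
\item Every orbit point is $\ge m$ and each orbit has at most $\ell$ points, so counting orbit minima up to $x$ suffices. No control of orbit heights is needed.
\end{itemize}

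Your fallback is not available from the paper. Persistence of small primes only bounds $\rho(n_j)$ from below. The two-sided statement $\rho(n_j)=\rho(n)+o(1)$ for $j\ge2$ is Conjecture~\ref{conj:higher}; the paper proves it only for $j=1$ (Theorem~\ref{thm:second}).
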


\section{Preliminary Results}
\label{sec:preliminaries}

Throughout this section, for all sufficiently large $n$, let
\[
g(n)
=
c\,\frac{\log_2 n}{\log_3 n},
\qquad
\delta(n)
=
\frac{\log_3 n}{\log_2 n},
\]
where $c>0$ is a suitable fixed absolute constant.

We first estimate the contribution of large prime divisors and then
establish the persistence of small prime divisors under any fixed
number of iterations.

\begin{lemma}\label{lem:old-large-primes}
Define
\[
h_1(n)
:=
\sum_{\substack{p>g(n)\\p\mid n}}\frac1p.
\]
Then
\[
h_1(n)<\delta(n)
\]
for a set of positive integers $n$ of asymptotic density $1$.
\end{lemma}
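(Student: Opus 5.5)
The approach is a first-moment (Markov) argument. I will work on the dyadic-free range $x/\log x<n\le x$, since integers $n\le x/\log x$ contribute $o(x)$. For such $n$ we have $\log_2 n=\log_2 x+o(1)$ and $\log_3 n=\log_3 x+o(1)$. Hence $g(n)\ge g_0:=\tfrac12 g(x)$ and $\delta(n)\ge \tfrac12\delta(x)$ for $x$ large. Consequently, for such $n$,
\[
h_1(n)\le \sum_{\substack{p\mid n\\ p>g_0}}\frac1p=:H(n),
\]
and it suffices to show $\#\{x/\log x<n\le x: H(n)\ge \tfrac12\delta(x)\}=o(x)$.

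The key step is to bound the first moment of $H$ by interchanging summation:
\[
\sum_{n\le x}H(n)=\sum_{g_0<p\le x}\frac1p\Bigl\lfloor\frac xp\Bigr\rfloor\le x\sum_{p>g_0}\frac1{p^2}\ll \frac{x}{g_0\log g_0}.
\]
This uses the prime number theorem (or Chebyshev) with partial summation for the tail $\sum_{p>y}p^{-2}\ll 1/(y\log y)$. With $g_0\asymp \log_2 x/\log_3 x$ we have $\log g_0\asymp \log_3 x$, so $g_0\log g_0\asymp \log_2 x$. Thus $\sum_{n\le x}H(n)\ll x/\log_2 x$.

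By Markov's inequality, the number of $n\le x$ with $H(n)\ge \tfrac12\delta(x)$ is
\[
\ll \frac{x/\log_2 x}{\log_3 x/\log_2 x}=\frac{x}{\log_3 x}=o(x).
\]
Adding the $x/\log x$ small integers and noting that $\log_3 x\to\infty$, the exceptional set has upper density $0$, so $h_1(n)<\delta(n)$ on a set of density $1$. The constant $c$ plays no essential role: any fixed $c>0$ only changes the implied constant, so the saving $1/\log_3 x$ holds regardless.

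The only point needing care is the step from $n$-dependent thresholds to $x$-dependent ones, namely the monotonicity in the parameter. Since $g$ is increasing for large arguments, replacing $g(n)$ by the smaller $g_0$ enlarges the sum, which is the correct direction. Likewise, $\delta(n)$ is comparable to $\delta(x)$ on $(x/\log x,x]$, and its decrease goes in the harmless direction. I expect no genuine obstacle beyond this bookkeeping and the elementary tail estimate for $\sum p^{-2}$.
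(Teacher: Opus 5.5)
Your proposal is correct and is essentially the paper's proof. Both interchange summation and use the tail bound $\sum_{p>y}p^{-2}\ll 1/(y\log y)$ to get a first moment $\ll x/\log_2 x$, then apply a Markov-type count to obtain $O(x/\log_3 x)$ exceptions. The only cosmetic difference is that you restrict to $x/\log x<n\le x$, whereas the paper uses $\sqrt{x}<n\le x$; either range works, since $g(n)$ and $\delta(n)$ are comparable to $g(x)$ and $\delta(x)$ throughout it.
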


\begin{proof}
Let $x$ be sufficiently large. Since
\[
t\longmapsto\frac{\log_2t}{\log_3t}
\]
is increasing for all sufficiently large $t$, for
\[
\sqrt{x}<n\le x
\]
we have
\[
g(n)\ge g(\sqrt{x}).
\]
Hence
\[
h_1(n)
\le
\sum_{\substack{p>g(\sqrt{x})\\p\mid n}}\frac1p.
\]
Therefore
\[
\begin{aligned}
\sum_{\sqrt{x}<n\le x}h_1(n)
&\le
\sum_{\sqrt{x}<n\le x}
\sum_{\substack{p>g(\sqrt{x})\\p\mid n}}\frac1p\\
&\le
\sum_{p>g(\sqrt{x})}\frac1p
\#\{n\le x:p\mid n\}\\
&\le
x\sum_{p>g(\sqrt{x})}\frac1{p^2}.
\end{aligned}
\]
By partial summation together with the standard estimate
\[
\pi(t)\ll\frac{t}{\log t},
\]
we have, for sufficiently large $y$,
\[
\sum_{p>y}\frac1{p^2}
\ll
\frac1{y\log y}.
\]
Since
\[
g(\sqrt{x})
=
c\,\frac{\log_2\sqrt{x}}{\log_3\sqrt{x}}
\asymp
\frac{\log_2x}{\log_3x}
\]
and
\[
\log g(\sqrt{x})\asymp\log_3x,
\]
it follows that
\[
\sum_{\sqrt{x}<n\le x}h_1(n)
\ll
\frac{x}{\log_2x}.
\]

Now let
\[
\mathcal E(x)
:=
\left\{
n:\sqrt{x}<n\le x,\quad
h_1(n)\ge\delta(n)
\right\}.
\]
Uniformly for $\sqrt{x}<n\le x$,
\[
\delta(n)
=
\frac{\log_3n}{\log_2n}
\asymp
\frac{\log_3x}{\log_2x}.
\]
Thus there exists an absolute constant $c_0>0$ such that, for all
sufficiently large $x$ and every $n\in\mathcal E(x)$,
\[
h_1(n)
\ge
c_0\frac{\log_3x}{\log_2x}.
\]
Consequently,
\[
c_0\,\#\mathcal E(x)
\frac{\log_3x}{\log_2x}
\le
\sum_{\sqrt{x}<n\le x}h_1(n)
\ll
\frac{x}{\log_2x},
\]
and hence
\[
\#\mathcal E(x)
\ll
\frac{x}{\log_3x}
=o(x).
\]
Therefore
\[
\#\{n\le x:h_1(n)\ge\delta(n)\}
\le
\sqrt{x}+\#\mathcal E(x)
=o(x).
\]
Hence
\[
h_1(n)<\delta(n)
\]
for a set of positive integers of asymptotic density $1$.
\end{proof}

We next control the large prime divisors newly introduced by $T$.

\begin{lemma}\label{lem:new-large-primes}
Define
\[
\mathcal H(n)
:=
\sum_{\substack{
p>g(n)\\
p\mid T(n),\;p\nmid n}}
\frac1p.
\]
Then
\[
\mathcal H(n)<\delta(n)
\]
for a set of positive integers $n$ of asymptotic density $1$.
\end{lemma}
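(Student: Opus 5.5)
The plan is a first-moment argument modelled on the proof of Lemma~\ref{lem:old-large-primes}. I bound $\sum_{\sqrt x<n\le x}\mathcal H(n)$ by $o(x\,\delta(x))$ after discarding a set of $o(x)$ integers, and then apply Markov's inequality exactly as before. The key input is the identity $T(Pm)=P\,T(m)+\psi(m)$ for a prime $P\nmid m$. It turns the condition $p\mid T(n)$ into a congruence condition on the largest prime factor of $n$, which Brun--Titchmarsh can then count.

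\emph{Step 1: discard atypical $n$.} Put $u=u(x)=\log_3 x$ and $z=x^{1/(3u)}$. I discard three sets:
\begin{itemize}
\item the $n\le\sqrt x$;
\item the $n\le x$ whose largest prime factor $P=P(n)$ satisfies $P\le x^{1/u}$; by the Dickman--de Bruijn estimate there are $\ll x\rho(u)+x/\log x=o(x)$ of these;
\item the $n$ with $P^2\mid n$; there are $\ll x^{1-1/(2u)}=o(x)$ of these.
\end{itemize}
Every remaining $n$ can be written uniquely as $n=Pm$ with $P\nmid m$, $P>x^{1/u}$ and $m\le x^{1-1/u}$.

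\emph{Step 2: cut off very large $p$.} Since $T(n)\le\psi(n)\ll x\log_2x$, the number of primes $p>z$ dividing $T(n)$ is $\ll \log x/\log z\ll u$. Their contribution to $\mathcal H(n)$ is therefore $\ll u/z$, which is negligible against $\delta(n)$. It remains to handle $g(\sqrt x)<p\le z$.

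\emph{Step 3: the congruence.} Let $p\nmid n$, so $p\nmid m$ and $p\ne P$. Suppose $p\mid T(m)$. Then $T(n)\equiv\psi(m)=m+T(m)\equiv m\not\equiv0\pmod p$, so $p\nmid T(n)$. Hence whenever $p\mid T(n)$ we have $p\nmid T(m)$. In that case $p\mid T(n)$ forces $P\equiv -\psi(m)\,T(m)^{-1}\pmod p$, a single residue class mod $p$.

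\emph{Step 4: count with Brun--Titchmarsh.} Since $p\le z\le (x/m)^{1/3}$, the Brun--Titchmarsh inequality bounds the number of admissible $P\le x/m$ by $\ll x/(m\,p\log(x/m))$. Summing over $m\le x^{1-1/u}$ gives
\[
\sum_{m\le x^{1-1/u}}\frac{1}{m\log(x/m)}\ll \log u=\log_4x,
\]
by partial summation. Therefore
\[
\sum_{n}{}^{\!\prime}\,\mathcal H(n)\ll x\log_4x\sum_{p>g(\sqrt x)}\frac1{p^2}+o(x\delta(x))\ll \frac{x\log_4 x}{\log_2x},
\]
where $\sum'$ denotes the sum over the non-discarded $n$. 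Here I use $\sum_{p>y}p^{-2}\ll 1/(y\log y)$ with $y=g(\sqrt x)$, so that $y\log y\asymp\log_2x$, exactly as in Lemma~\ref{lem:old-large-primes}.

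\emph{Step 5: Markov.} Since $\delta(n)\asymp\log_3x/\log_2x$ uniformly for $\sqrt x<n\le x$, Step 4 gives $\#\{n:\mathcal H(n)\ge\delta(n)\}\ll x\log_4x/\log_3x+o(x)=o(x)$, which is the claim.

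The main obstacle is Step 4, more precisely the choice of $u$. If one only requires $P>x^{1/\log_2x}$, the sum over $m$ is of size $\log_3x$. The first moment is then $\asymp x\delta(x)$, and Markov's inequality yields nothing. What makes the argument work is taking $u$ growing slowly enough that $\rho(u)\to0$, yet so slowly that $\log u=o(\log_3x)$; the choice $u=\log_3x$ does both. I also need to check that Brun--Titchmarsh is applied only in the range $p\le(x/m)^{1/3}$, which is exactly what the cutoff at $z$ in Step 2 ensures.
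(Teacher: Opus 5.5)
Your proposal is correct and is essentially the paper's argument: you decompose $n=Pm$ by the largest prime factor, use the identity $T(Pm)=P\,T(m)+\psi(m)$ to turn $p\mid T(n)$ into a single residue condition on $P$, apply Brun--Titchmarsh, and finish with a first-moment/Markov bound. The differences are only bookkeeping, plus one harmless omission:

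\begin{itemize}
\item You discard the atypical $n$ outright, so $\rho(u)\to0$ suffices. The paper instead bounds their contribution via $\mathcal H(n)\ll\log_2x$, which forces it to use a $(\log_2x)^{-5/2}$ smooth-number bound.
\item You evaluate $\sum_m 1/(m\log(x/m))\ll\log u$ exactly, where the paper uses the cruder factor $1/\eta=3\log_3x/\log_4x$. This gives you a smaller exceptional set, $\ll x\log_4x/\log_3x$ instead of $\ll x/\log_4x$.
\item You leave implicit that the class $-\psi(m)T(m)^{-1}$ may be $0\bmod p$. In that case it contains no admissible $P$, since $P\ne p$, so the coprimality hypothesis of Brun--Titchmarsh causes no problem.
\end{itemize}
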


\begin{proof}
Let $x$ be sufficiently large, and put
\[
\eta=\eta(x)
:=
\frac{\log_4x}{3\log_3x}.
\]
For every integer $n>1$, let $P(n)$ denote the largest prime factor
of $n$. Define
\[
\mathcal A(x)
:=
\left\{
n:
\sqrt{x}<n\le x,\quad
P(n)>x^\eta,\quad
P(n)^2\nmid n
\right\}.
\]

For $\sqrt{x}<n\le x$, we have
\[
g(n)\le g(x)\ll\log_2x,
\]
whereas
\[
x^{\eta/2}
=
\exp\left(
\frac{\log x\,\log_4x}{6\log_3x}
\right).
\]
Hence, for all sufficiently large $x$,
\[
g(n)<x^{\eta/2}
\qquad
(\sqrt{x}<n\le x).
\]
Thus, for $n\in\mathcal A(x)$, the primes occurring in
$\mathcal H(n)$ split into the two ranges
\[
g(n)<p\le x^{\eta/2}
\qquad\text{and}\qquad
p>x^{\eta/2}.
\]

Accordingly, we write
\begin{equation}
\sum_{\sqrt{x}<n\le x}\mathcal H(n)
=
S_1+S_2+S_3,
\label{eq:H-decomposition}
\end{equation}
where
\[
S_1
:=
\sum_{\substack{
\sqrt{x}<n\le x\\
n\notin\mathcal A(x)}}
\mathcal H(n),
\]
\[
S_2
:=
\sum_{n\in\mathcal A(x)}
\sum_{\substack{
p>x^{\eta/2}\\
p\mid T(n),\;p\nmid n}}
\frac1p,
\]
and
\[
S_3
:=
\sum_{n\in\mathcal A(x)}
\sum_{\substack{
g(n)<p\le x^{\eta/2}\\
p\mid T(n),\;p\nmid n}}
\frac1p.
\]
We estimate these three sums separately.

\medskip
\noindent
\textit{Estimate of $S_1$.}
The complement of $\mathcal A(x)$ in the interval
$\sqrt{x}<n\le x$ is the disjoint union
\[
\mathcal B_2(x)\cup\mathcal B_3(x),
\]
where
\[
\mathcal B_2(x)
:=
\left\{
n:
\sqrt{x}<n\le x,\quad
P(n)\le x^\eta
\right\},
\]
and
\[
\mathcal B_3(x)
:=
\left\{
n:
\sqrt{x}<n\le x,\quad
P(n)>x^\eta,\quad
P(n)^2\mid n
\right\}.
\]

For $\mathcal B_2(x)$, de Bruijn's estimate for smooth numbers
\cite{deBruijn} gives
\[
\#\mathcal B_2(x)
\le
x\exp\left\{
-(1+o(1))
\frac1\eta\log\frac1\eta
\right\}.
\]
Since
\[
\frac1\eta
=
\frac{3\log_3x}{\log_4x}
\]
and
\[
\log\frac1\eta
=
\log\left(
\frac{3\log_3x}{\log_4x}
\right)
=
\log_4x-\log_5x+O(1),
\]
we have
\[
\frac1\eta\log\frac1\eta
=
(3+o(1))\log_3x.
\]
Consequently,
\[
\#\mathcal B_2(x)
\ll
\frac{x}{(\log_2x)^{5/2}}
\]
for all sufficiently large $x$.

Moreover,
\[
\begin{aligned}
\#\mathcal B_3(x)
&\le
\sum_{p>x^\eta}
\#\{n\le x:p^2\mid n\}\\
&\le
x\sum_{p>x^\eta}\frac1{p^2}\\
&\le
x\sum_{m>x^\eta}\frac1{m^2}\\
&\ll
x^{1-\eta}.
\end{aligned}
\]

We next obtain a uniform upper bound for $\mathcal H(n)$ in the
range $\sqrt{x}<n\le x$. Since
\[
\psi(n)\le\sigma(n)
\]
and
\[
\frac{\sigma(n)}n
=
\sum_{d\mid n}\frac1d
\le
1+\log n,
\]
we have
\[
0<T(n)<\psi(n)\ll x\log x.
\]
Hence every prime divisor of $T(n)$ is at most $Cx\log x$ for some
absolute constant $C>0$. By Mertens' estimate \cite{ref9},
\[
\mathcal H(n)
\le
\sum_{p\mid T(n)}\frac1p
\le
\sum_{p\le Cx\log x}\frac1p
\ll
\log_2x
\]
uniformly for $\sqrt{x}<n\le x$.

It follows that
\[
S_1
\ll
\left(
\frac{x}{(\log_2x)^{5/2}}
+
x^{1-\eta}
\right)\log_2x.
\]
Since
\[
x^{-\eta}(\log_2x)^2\longrightarrow0,
\]
we obtain
\begin{equation}
S_1
\ll
\frac{x}{\log_2x}.
\label{eq:H-S1}
\end{equation}

\medskip
\noindent
\textit{Estimate of $S_2$.}
Fix $n\in\mathcal A(x)$, and suppose that $T(n)$ has $r$ distinct
prime divisors
\[
p_1,\ldots,p_r>x^{\eta/2}.
\]
Since
\[
p_1\cdots p_r\mid T(n),
\]
we have
\[
x^{r\eta/2}
<
p_1\cdots p_r
\le
T(n)
\ll
x\log x.
\]
Taking logarithms gives
\[
\frac{r\eta}{2}\log x
\le
\log x+\log_2x+O(1),
\]
and hence, for all sufficiently large $x$,
\[
r\le\frac3\eta.
\]
Therefore
\[
\sum_{\substack{
p>x^{\eta/2}\\
p\mid T(n),\;p\nmid n}}
\frac1p
\le
\frac3\eta x^{-\eta/2}.
\]
Summing over $n\in\mathcal A(x)$, we obtain
\[
S_2
\ll
\frac{x^{1-\eta/2}}{\eta}.
\]
Since
\[
x^{-\eta/2}
=
\exp\left(
-\frac{\log x\,\log_4x}
{6\log_3x}
\right)
\]
decays faster than every fixed negative power of $\log x$, we get
\begin{equation}
S_2
\ll
\frac{x}{\log x}.
\label{eq:H-S2}
\end{equation}

\medskip
\noindent
\textit{Estimate of $S_3$.}
Since
\[
n>\sqrt{x}
\qquad
(n\in\mathcal A(x)),
\]
and $g$ is increasing for all sufficiently large arguments, we have
\[
g(n)\ge g(x^{1/2}).
\]
Hence
\[
S_3
\le
\sum_{g(x^{1/2})<p\le x^{\eta/2}}
\frac1p\,N_p(x),
\]
where
\[
N_p(x)
:=
\#\left\{
n\in\mathcal A(x):
p\mid T(n),\ p\nmid n
\right\}.
\]

Fix a prime $p$ satisfying
\[
g(x^{1/2})<p\le x^{\eta/2}.
\]
Let $n$ be counted by $N_p(x)$, and write
\[
n=Pm,
\qquad
P=P(n).
\]
Since
\[
P^2\nmid n,
\]
we have
\[
P\nmid m.
\]
Moreover, since
\[
P>x^\eta
\qquad\text{and}\qquad
n\le x,
\]
we have
\[
m<x^{1-\eta}.
\]

By the multiplicativity of $\psi$ and $P\nmid m$,
\begin{equation}
T(Pm)
=
P\,T(m)+\psi(m).
\label{eq:H-linear-identity}
\end{equation}
Since $p\mid T(n)$, we obtain
\begin{equation}
P\,T(m)
\equiv
-\psi(m)
\pmod p.
\label{eq:H-congruence}
\end{equation}

We claim that
\[
p\nmid T(m)\psi(m).
\]
Indeed, suppose first that
\[
p\mid T(m).
\]
Then \eqref{eq:H-congruence} gives
\[
p\mid\psi(m).
\]
Since
\[
T(m)=\psi(m)-m,
\]
it follows that
\[
p\mid m.
\]
But $m\mid n$, contradicting
\[
p\nmid n.
\]

Suppose next that
\[
p\mid\psi(m).
\]
By the preceding argument,
\[
p\nmid T(m).
\]
Hence \eqref{eq:H-congruence} implies
\[
p\mid P.
\]
Since both $p$ and $P$ are prime, this would give
\[
p=P.
\]
However,
\[
p
\le
x^{\eta/2}
<
x^\eta
<
P,
\]
a contradiction. Thus
\[
p\nmid T(m)\psi(m).
\]

Therefore $T(m)$ is invertible modulo $p$, and
\[
-\psi(m)T(m)^{-1}
\not\equiv
0
\pmod p.
\]
Let $a_{m,p}$ be the unique integer satisfying
\[
1\le a_{m,p}\le p-1
\]
and
\[
a_{m,p}
\equiv
-\psi(m)T(m)^{-1}
\pmod p.
\]
Then
\[
P\equiv a_{m,p}\pmod p,
\qquad
(a_{m,p},p)=1.
\]

For $(a,q)=1$, write
\[
\pi(X;q,a)
:=
\#\left\{
P\le X:
P\text{ is prime and }P\equiv a\pmod q
\right\}.
\]
Consequently,
\[
N_p(x)
\le
\sum_{\substack{
m\le x^{1-\eta}\\
p\nmid T(m)\psi(m)}}
\pi\left(
\frac{x}{m};
p,a_{m,p}
\right).
\]

Since
\[
m\le x^{1-\eta}
\qquad\text{and}\qquad
p\le x^{\eta/2},
\]
we have
\[
\frac{x}{m}
\ge
x^\eta
>
x^{\eta/2}
\ge
p.
\]
Thus the Brun--Titchmarsh inequality \cite{ref14} is applicable and
gives
\[
\pi\left(
\frac{x}{m};
p,a_{m,p}
\right)
\le
\frac{2x}
{m(p-1)\log(x/(mp))}.
\]
Moreover,
\[
mp
\le
x^{1-\eta/2},
\]
so
\[
\log\frac{x}{mp}
\ge
\frac{\eta}{2}\log x.
\]
It follows that
\[
\begin{aligned}
N_p(x)
&\le
\frac{4x}
{(p-1)\eta\log x}
\sum_{\substack{
m\le x^{1-\eta}\\
p\nmid T(m)\psi(m)}}
\frac1m\\
&\le
\frac{4x}
{(p-1)\eta\log x}
\sum_{m\le x^{1-\eta}}\frac1m\\
&\ll
\frac{x}{\eta p}.
\end{aligned}
\]

Hence
\[
S_3
\ll
\frac{x}{\eta}
\sum_{p>g(x^{1/2})}\frac1{p^2}.
\]
By partial summation together with
\[
\pi(t)\ll\frac{t}{\log t},
\]
we have
\[
\sum_{p>y}\frac1{p^2}
\ll
\frac1{y\log y}
\]
for sufficiently large $y$. Therefore
\[
S_3
\ll
\frac{x}
{\eta\,g(x^{1/2})
\log g(x^{1/2})}.
\]
Since
\[
g(x^{1/2})
\asymp
\frac{\log_2x}{\log_3x}
\]
and
\[
\log g(x^{1/2})
\asymp
\log_3x,
\]
we have
\[
g(x^{1/2})
\log g(x^{1/2})
\asymp
\log_2x.
\]
Recalling that
\[
\eta
=
\frac{\log_4x}{3\log_3x},
\]
we conclude that
\begin{equation}
S_3
\ll
\frac{x\log_3x}
{\log_2x\,\log_4x}.
\label{eq:H-S3}
\end{equation}

Combining
\eqref{eq:H-S1},
\eqref{eq:H-S2}, and
\eqref{eq:H-S3}, we obtain
\begin{equation}
\sum_{\sqrt{x}<n\le x}\mathcal H(n)
\ll
\frac{x\log_3x}
{\log_2x\,\log_4x}.
\label{eq:H-mean}
\end{equation}

Now define
\[
\mathcal E_H(x)
:=
\left\{
n:
\sqrt{x}<n\le x,\quad
\mathcal H(n)\ge\delta(n)
\right\}.
\]
Uniformly for
\[
\sqrt{x}<n\le x,
\]
we have
\[
\delta(n)
=
\frac{\log_3n}{\log_2n}
\asymp
\frac{\log_3x}{\log_2x}.
\]
Hence there exists an absolute constant $c_H>0$ such that, for all
sufficiently large $x$ and every $n\in\mathcal E_H(x)$,
\[
\mathcal H(n)
\ge
c_H
\frac{\log_3x}{\log_2x}.
\]
Therefore, by \eqref{eq:H-mean},
\[
c_H
\#\mathcal E_H(x)
\frac{\log_3x}{\log_2x}
\le
\sum_{\sqrt{x}<n\le x}\mathcal H(n)
\ll
\frac{x\log_3x}
{\log_2x\,\log_4x},
\]
and hence
\[
\#\mathcal E_H(x)
\ll
\frac{x}{\log_4x}
=
o(x).
\]

Finally, the integers $n\le\sqrt{x}$ contribute at most
$\sqrt{x}=o(x)$ additional possible exceptions. Thus
\[
\begin{aligned}
\#\{n\le x:\mathcal H(n)\ge\delta(n)\}
&\le
\sqrt{x}
+
\#\mathcal E_H(x)\\
&=
o(x).
\end{aligned}
\]
Therefore
\[
\mathcal H(n)<\delta(n)
\]
for a set of positive integers of asymptotic density $1$.
\end{proof}

We now turn to the persistence of small prime divisors under a fixed
number of iterations. The parameters below are always kept fixed
before the density limit is taken.

\begin{lemma}\label{lem:prime-chain}
Let $M$ and $\ell$ be fixed positive integers. Then, for almost all
positive integers $n$, there exist prime divisors
\[
q_1,\ldots,q_\ell
\]
of $n$ such that
\[
q_1\equiv-1\pmod M
\]
and
\[
q_{r+1}\equiv-1\pmod{q_r},
\qquad
1\le r<\ell.
\]
\end{lemma}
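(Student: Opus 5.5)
The plan is the classical Erd\H{o}s prime-chain argument. We build the chain inductively by a sieve over a sequence of widely separated intervals. Let $y=y(x)$ tend to infinity slowly, for instance $y=\log_{\ell+2}x$ or any function tending to infinity slower than all the iterated logarithms that appear. We aim to find the chain with $q_r$ lying in a range where the reciprocal sum of admissible primes, $\sum 1/q$, is large enough that almost every $n\le x$ has at least one such divisor.

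The key input is a quantitative Dirichlet theorem (Mertens in progressions). For fixed $M$,
\[
\sum_{\substack{q\le t\\ q\equiv -1\ (\mathrm{mod}\ M)}}\frac1q=\frac{\log_2 t}{\varphi(M)}+O_M(1).
\]
For a varying modulus $q_r$ we use Brun--Titchmarsh, or Page--Siegel--Walfisz, to get a lower bound uniform in $q_r\le$ a small power of $\log t$:
\[
\sum_{\substack{q\le t\\ q\equiv-1\ (\mathrm{mod}\ q_r)}}\frac1q\ \ge\ \frac{c\log_2 t}{q_r}\qquad(t\ \text{large in terms of}\ q_r).
\]

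The key steps, in order, are as follows.

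\textbf{Step 1.} We show that the set of $n\le x$ with no prime divisor $q_1\equiv-1\pmod M$ with $q_1\le y_1$ is small. The count of $n\le x$ with no prime factor in a set $\mathcal Q$ of primes $\le x^{1/2}$ is
\[
\ll x\prod_{q\in\mathcal Q}(1-1/q)\le x\exp\Bigl(-\sum_{q\in\mathcal Q}1/q\Bigr),
\]
by Brun's (or Selberg's upper bound) sieve, or more simply by Hall--Tenenbaum-type bounds. Choosing $y_1$ with $\log_2 y_1\to\infty$ makes this $o(x)$.

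\textbf{Step 2.} Given $q_1\le y_1$, consider primes $q_2\equiv-1\pmod{q_1}$ in $(y_1,y_2]$. The set $\mathcal Q_2=\mathcal Q_2(q_1)$ has $\sum 1/q\ge c\log_2 y_2/q_1-O(\log_2 y_1)$, which tends to infinity if $\log_2 y_2$ is much larger than $y_1\log_2 y_1$.

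\textbf{Step 3.} Count the bad $n$: those $n$ which have some $q_1$ but no $q_2$ attached to any such $q_1$. We bound by summing over the possible $q_1\le y_1$ (at most $y_1$ choices), each contributing $\ll x\exp(-\sum_{\mathcal Q_2(q_1)}1/q)$. This is $o(x)$ provided
\[
y_1\exp\bigl(-c\log_2 y_2/y_1\bigr)\to0.
\]
Iterate the same construction up to $q_\ell$ with the tower $y_{r+1}=\exp\exp(y_r^{2})$, and $y_\ell\le x^{1/2}$. This is possible since $\ell$ is fixed and $y_1$ grows slowly enough, e.g.\ $y_1=\log_{3\ell}x$.

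The main obstacle is uniformity. We need a lower bound for reciprocal sums of primes in progressions modulo $q_r$ that is uniform in $q_r\le y_r$, together with sieve upper bounds valid for sets of primes up to $x^{1/2}$. For the former, since $q_r\le y_r$ is tiny compared to $\log y_{r+1}$, Siegel--Walfisz gives
\[
\pi(t;q_r,-1)\sim\frac{\mathrm{li}(t)}{q_r-1}
\]
for $t\ge\exp(y_r^{2})$, uniformly, which suffices after partial summation. The union bound over the $\le y_r$ choices is harmless because of the tower growth. Almost-all then follows by letting $x\to\infty$, since everything has been reduced to $o(x)$ exceptions.
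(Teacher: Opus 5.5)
Your proof is correct. It differs from the paper only in that the paper gives no argument: it simply invokes Lemma~1 of Erd\H{o}s (1976) with $t=M$, $k=\ell$, whereas you reconstruct the classical sieve argument behind that lemma.

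Your version works as follows:
\begin{itemize}
\item On the first scale, Mertens' theorem for the fixed progression $-1\bmod M$ gives a divergent reciprocal sum.
\item On the tower $y_{r+1}=\exp\exp(y_r^2)$, Siegel--Walfisz applies uniformly, because $q_r\le y_r\le(\log t)^{1/2}$ for $t\ge\exp(y_r^2)$. This gives $\sum 1/p\ge y_r-1$ over primes $p\equiv-1\pmod{q_r}$ in $[\exp(y_r^2),y_{r+1}]$.
\item The union bound over at most $y_r$ values of $q_r$ is then harmless. It suffices to track only the last link $q_r$, since the next sieving set depends on nothing else.
\item Starting from $y_1=\log_{3\ell}x$ gives $y_\ell\le\log_3x$, well below $x^{1/2}$.
\end{itemize}
The almost-all conclusion is legitimate: the chain property does not depend on $x$, only your witness ranges do.

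Citing buys brevity. Your route is self-contained and isolates the one delicate point, namely uniformity in the moduli $q_r$.

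Three small remarks:
\begin{itemize}
\item Brun--Titchmarsh gives only upper bounds, so it cannot supply the lower bound on $\sum_{p\equiv-1\,(q_r)}1/p$ that you need. Siegel--Walfisz, which you end up using, does.
\item Since all sieving primes are at most $\log_3x$, exact inclusion--exclusion, with error $O(2^{\pi(\log_3 x)})$, suffices in place of Brun's or Selberg's sieve.
\item You could avoid Siegel--Walfisz altogether. Choose $y_{r+1}$ as a sufficiently large fixed function of $y_r$, using Dirichlet's theorem for the finitely many moduli $q\le y_r$, and let $y_1\to\infty$ slowly enough that $y_\ell\le\log x$.
\end{itemize}
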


\begin{proof}
This follows directly from Lemma~1 of Erd\H{o}s~[6],
applied with the fixed parameters $t=M$ and $k=\ell$.
Indeed, that lemma gives, for almost all $n$, a chain of prime
divisors $q_1,\ldots,q_\ell$ satisfying
\[
q_1\equiv -1\pmod M,
\qquad
q_{r+1}\equiv -1\pmod{q_r},
\quad 1\le r<\ell.
\]
\end{proof}

The preceding prime chain allows the congruence information to be
propagated through finitely many iterates.

\begin{lemma}\label{lem:congruence-persistence}
Let $K$ and $M$ be fixed positive integers. Then, for almost all
positive integers $n$,
\[
T^{(j)}(n)>1
\]
and
\[
T^{(j)}(n)\equiv(-1)^jn\pmod M,
\qquad
0\le j\le K,
\]
simultaneously.
\end{lemma}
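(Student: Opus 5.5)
The plan is to derive everything from the prime chain of Lemma~\ref{lem:prime-chain}, applied with the given modulus $M$ and with chain length $\ell=K+1$. Outside a set of density zero, $n$ then has prime divisors $q_1,\ldots,q_{K+1}$ with $q_1\equiv-1\pmod M$ and $q_{r+1}\equiv-1\pmod{q_r}$. I will show that any $n$ with such a chain satisfies the conclusion for all $0\le j\le K$. The argument is purely deterministic, so no further density estimates are needed, and we may also discard the density-zero set $\{1\}$.

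The key observation is a one-step shift of the chain. Suppose $m\ge2$ has prime divisors $q_1,\ldots,q_s$ forming such a chain with $s\ge1$. For any prime $q\mid m$ with $q^a\parallel m$, we have $\psi(q^a)=q^{a-1}(q+1)$. Since $\psi$ is multiplicative, $q+1\mid\psi(m)$.

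First, $q_1\mid m$ and $M\mid q_1+1$ give $M\mid\psi(m)$. Hence
\[
T(m)=\psi(m)-m\equiv-m\pmod M.
\]
Second, for $1\le r<s$ we have $q_r\mid q_{r+1}+1\mid\psi(m)$ and $q_r\mid m$. Hence $q_r\mid T(m)$. So $T(m)$ carries the shorter chain $q_1,\ldots,q_{s-1}$, with the same congruence conditions.

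Third, for positivity: $\psi(m)>m$ for $m\ge2$, so $T(m)\ge1$. If $s\ge2$, then $T(m)$ is a positive multiple of the prime $q_1$, so $T(m)\ge q_1\ge2$.

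The proof then proceeds by induction on $j$. The claim is that for $0\le j\le K$ the integer $T^{(j)}(n)$ is defined, is at least $2$, is divisible by the chain $q_1,\ldots,q_{K+1-j}$, and satisfies $T^{(j)}(n)\equiv(-1)^jn\pmod M$. The base case $j=0$ is the hypothesis, using $n\ge2$. For the step from $j$ to $j+1$ with $j<K$, the chain at stage $j$ has length $K+1-j\ge2$. Applying the shift observation to $m=T^{(j)}(n)$ gives three things. The congruence becomes $T^{(j+1)}(n)\equiv-T^{(j)}(n)\equiv(-1)^{j+1}n\pmod M$. The divisibility by $q_1,\ldots,q_{K-j}$ follows. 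And we get $T^{(j+1)}(n)\ge q_1\ge2$.

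The only delicate point is the length bookkeeping. The chain must be long enough that at every stage $j<K$ at least two links remain. This is what guarantees that $q_1$ still divides $T^{(j+1)}(n)$, which yields both $T^{(j+1)}(n)>1$ and the next congruence step. Taking $\ell=K+1$ in Lemma~\ref{lem:prime-chain}, or any larger fixed value, handles this.
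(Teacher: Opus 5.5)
Your proposal is correct and follows essentially the same route as the paper: apply Lemma~\ref{lem:prime-chain} with $\ell=K+1$, show by induction that $T^{(j)}(n)>1$ and $q_1,\ldots,q_{K+1-j}\mid T^{(j)}(n)$, and use $M\mid q_1+1\mid\psi(T^{(j)}(n))$ to get $T^{(j+1)}(n)\equiv-T^{(j)}(n)\pmod M$. The paper runs the divisibility induction and the congruence induction in two passes, whereas you merge them into a single inductive statement, which is only a cosmetic difference.
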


\begin{proof}
Apply Lemma~\ref{lem:prime-chain} with
\[
\ell=K+1.
\]
Outside a set of asymptotic density $0$, the integer $n$ has prime
divisors
\[
q_1,\ldots,q_{K+1}
\]
satisfying
\[
q_1\equiv-1\pmod M
\]
and
\[
q_{r+1}\equiv-1\pmod{q_r},
\qquad
1\le r\le K.
\]

We first show simultaneously that
\[
T^{(j)}(n)>1
\]
and
\[
q_1,\ldots,q_{K+1-j}\mid T^{(j)}(n),
\qquad
0\le j\le K.
\]
For $j=0$, we have
\[
T^{(0)}(n)=n.
\]
Since every $q_r$ divides $n$, and in particular $q_1\mid n$, we
have
\[
n\ge q_1\ge2.
\]
Thus the assertion holds for $j=0$.

Suppose that the assertion holds for some $j<K$. Then
\[
T^{(j)}(n)>1.
\]
Let
\[
1\le r\le K-j.
\]
By the induction hypothesis,
\[
q_r\mid T^{(j)}(n)
\qquad\text{and}\qquad
q_{r+1}\mid T^{(j)}(n).
\]
Write
\[
q_{r+1}^a\parallel T^{(j)}(n)
\]
for some $a\ge1$. Since
\[
\psi(q_{r+1}^a)
=
q_{r+1}^{a-1}(q_{r+1}+1)
\]
and
\[
q_r\mid q_{r+1}+1,
\]
the multiplicativity of $\psi$ gives
\[
q_r\mid\psi\bigl(T^{(j)}(n)\bigr).
\]
Since also
\[
q_r\mid T^{(j)}(n),
\]
we obtain
\[
q_r
\mid
\psi\bigl(T^{(j)}(n)\bigr)-T^{(j)}(n)
=
T^{(j+1)}(n).
\]
Hence
\[
q_1,\ldots,q_{K-j}\mid T^{(j+1)}(n).
\]

Moreover, $T(m)>0$ for every integer $m>1$, since
\[
T(m)
=
m\left(
\prod_{p\mid m}\left(1+\frac1p\right)-1
\right)>0.
\]
Thus
\[
T^{(j+1)}(n)>0.
\]
Since
\[
q_1\mid T^{(j+1)}(n),
\]
it follows that
\[
T^{(j+1)}(n)\ge q_1\ge2.
\]
This completes the induction.

In particular,
\[
q_1\mid T^{(j)}(n),
\qquad
0\le j\le K.
\]
Since
\[
q_1\equiv-1\pmod M,
\]
we have
\[
M\mid q_1+1.
\]
For $0\le j<K$, write
\[
q_1^a\parallel T^{(j)}(n).
\]
Then
\[
\psi(q_1^a)
=
q_1^{a-1}(q_1+1)
\]
is divisible by $M$. By the multiplicativity of $\psi$, it follows
that
\[
M\mid\psi\bigl(T^{(j)}(n)\bigr).
\]
Therefore
\[
\begin{aligned}
T^{(j+1)}(n)
&=
\psi\bigl(T^{(j)}(n)\bigr)-T^{(j)}(n)\\
&\equiv
-T^{(j)}(n)
\pmod M.
\end{aligned}
\]
Starting from
\[
T^{(0)}(n)=n,
\]
induction gives
\[
T^{(j)}(n)\equiv(-1)^jn\pmod M,
\qquad
0\le j\le K.
\]
\end{proof}

As an immediate consequence, divisibility by any prescribed finite
set of primes is preserved through a fixed number of iterations.

\begin{lemma}\label{lem:small-prime-persistence}
Let $K$ be a fixed positive integer and let $y\ge2$ be fixed.
Then, for almost all positive integers $n$,
\[
p\mid T^{(j)}(n)
\quad\Longleftrightarrow\quad
p\mid n
\]
simultaneously for every prime $p\le y$ and every
$0\le j\le K$.
\end{lemma}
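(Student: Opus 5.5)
The plan is to derive the statement directly from Lemma~\ref{lem:congruence-persistence}, applied with a single well-chosen modulus. I take
\[
M:=\prod_{p\le y}p,
\]
which is a fixed positive integer because $y$ is fixed. Lemma~\ref{lem:congruence-persistence}, used with this $M$ and with $K$, supplies an exceptional set of asymptotic density $0$. Outside that set we have
\[
T^{(j)}(n)>1
\qquad\text{and}\qquad
T^{(j)}(n)\equiv(-1)^jn\pmod M,
\qquad 0\le j\le K,
\]
simultaneously. In particular every iterate involved is a well-defined integer greater than $1$, so divisibility statements about these iterates make sense.

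Next, fix such an $n$, a prime $p\le y$, and an index $0\le j\le K$. Since $p\mid M$, reducing the congruence modulo $p$ gives
\[
T^{(j)}(n)\equiv\pm n\pmod p.
\]
Hence $p\mid T^{(j)}(n)$ holds if and only if $p\mid \pm n$, which holds if and only if $p\mid n$. This is exactly the asserted equivalence. The exceptional set came from one application of the lemma and does not depend on $p$ or $j$, so the equivalence holds simultaneously for all primes $p\le y$ and all $0\le j\le K$.

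I expect no real obstacle here; the lemma is essentially a corollary. The only point requiring care is to fix $M$ before taking the density limit. This guarantees that a single density-zero exceptional set covers all of the finitely many pairs $(p,j)$ at once. If one instead applied Lemma~\ref{lem:congruence-persistence} separately for each pair, one would be taking a finite union of density-zero sets, which would also suffice.
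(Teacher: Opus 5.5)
Your proposal is correct and follows the paper's proof exactly: take $M=\prod_{p\le y}p$, apply Lemma~\ref{lem:congruence-persistence}, reduce the congruence $T^{(j)}(n)\equiv(-1)^jn \pmod M$ modulo each prime $p\le y$, and use that $\pm1$ is invertible modulo $p$. Your remark that a single fixed $M$ gives one exceptional set covering all pairs $(p,j)$ matches how the paper handles the simultaneity.
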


\begin{proof}
Let
\[
M=\prod_{p\le y}p.
\]
Since $y$ is fixed, so is $M$. By
Lemma~\ref{lem:congruence-persistence}, for almost all $n$,
\[
T^{(j)}(n)\equiv(-1)^jn\pmod M,
\qquad
0\le j\le K.
\]
Hence, for every prime $p\le y$,
\[
T^{(j)}(n)\equiv(-1)^jn\pmod p.
\]
Since $(-1)^j$ is invertible modulo $p$, it follows that
\[
p\mid T^{(j)}(n)
\quad\Longleftrightarrow\quad
p\mid n.
\]
\end{proof}

For later use, define
\[
R(m)
:=
\frac{\psi(m)}m
=
\prod_{p\mid m}\left(1+\frac1p\right).
\]
For $y\ge2$, put
\[
R_y(m)
:=
\prod_{\substack{p\le y\\p\mid m}}
\left(1+\frac1p\right),
\qquad
H_y(m)
:=
\sum_{\substack{p>y\\p\mid m}}\frac1p.
\]

Since $R_y(m)$ depends only on the prime divisors $p\le y$ of $m$,
Lemma~\ref{lem:small-prime-persistence} implies that, for every
fixed $K$ and $y$,
\[
R_y\bigl(T^{(j)}(n)\bigr)
=
R_y(n),
\qquad
0\le j\le K,
\]
for almost all positive integers $n$.

We shall also need the following elementary estimate for the
large-prime tail.

\begin{lemma}\label{lem:large-prime-tail}
For $x\ge1$ and $y\ge2$,
\[
\sum_{n\le x}H_y(n)
\ll
\frac{x}{y}.
\]
Consequently, for every $\varepsilon>0$,
\[
\limsup_{x\to\infty}
\frac1x
\#\{n\le x:H_y(n)>\varepsilon\}
\ll
\frac1{\varepsilon y}.
\]
\end{lemma}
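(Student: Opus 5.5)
The plan is to swap the order of summation and then use Markov's inequality. By definition of $H_y$,
\[
\sum_{n\le x}H_y(n)
=
\sum_{n\le x}\sum_{\substack{p>y\\p\mid n}}\frac1p
=
\sum_{y<p\le x}\frac1p\,\#\{n\le x:p\mid n\}
\le
x\sum_{p>y}\frac1{p^2}.
\]
Primes $p>x$ contribute nothing, since no $n\le x$ is divisible by them. We now bound the tail $\sum_{p>y}p^{-2}$. One could invoke the bound $\sum_{p>y}p^{-2}\ll 1/(y\log y)$ from the proof of Lemma~\ref{lem:old-large-primes}, but the trivial estimate suffices:
\[
\sum_{p>y}\frac1{p^2}\le\sum_{m>y}\frac1{m^2}\le\frac1{\lfloor y\rfloor}\le\frac2y
\qquad(y\ge2).
\]
This gives $\sum_{n\le x}H_y(n)\ll x/y$ with an absolute implied constant, uniformly in $x\ge1$ and $y\ge2$.

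For the consequence, fix $\varepsilon>0$. Every $n$ with $H_y(n)>\varepsilon$ contributes more than $\varepsilon$ to the sum above, and $H_y\ge0$ everywhere. Hence
\[
\varepsilon\,\#\{n\le x:H_y(n)>\varepsilon\}\le\sum_{n\le x}H_y(n)\le\frac{Cx}{y}
\]
for an absolute constant $C$. Dividing by $\varepsilon x$ and taking $\limsup_{x\to\infty}$ gives the bound $\ll 1/(\varepsilon y)$, and the implied constant is uniform in $\varepsilon$ and $y$.

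There is no real obstacle here. The only point to check is that the implied constant is absolute, i.e.\ independent of $y$ and $x$. This is what makes the lemma useful later, where one first lets $x\to\infty$ and then sends $y\to\infty$ slowly to make the exceptional density arbitrarily small. The elementary bound $\sum_{m>y}m^{-2}\le 2/y$ already ensures this uniformity.
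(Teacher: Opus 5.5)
Your proposal is correct and matches the paper's proof: you interchange the order of summation, bound the result by $x\sum_{m>y}m^{-2}\ll x/y$, and then apply Markov's inequality. Your explicit estimate $1/\lfloor y\rfloor\le 2/y$ simply makes concrete the absolute implied constant that the paper leaves implicit.
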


\begin{proof}
Interchanging the order of summation gives
\[
\begin{aligned}
\sum_{n\le x}H_y(n)
&=
\sum_{n\le x}
\sum_{\substack{p>y\\p\mid n}}\frac1p\\
&=
\sum_{p>y}\frac1p
\left\lfloor\frac{x}{p}\right\rfloor\\
&\le
x\sum_{p>y}\frac1{p^2}\\
&\le
x\sum_{m>y}\frac1{m^2}\\
&\ll
\frac{x}{y}.
\end{aligned}
\]
This proves the first assertion.

For the second assertion,
\[
\varepsilon\,
\#\{n\le x:H_y(n)>\varepsilon\}
\le
\sum_{n\le x}H_y(n).
\]
Hence
\[
\#\{n\le x:H_y(n)>\varepsilon\}
\ll
\frac{x}{\varepsilon y}.
\]
Dividing by $x$ and taking the upper limit as $x\to\infty$ gives
\[
\limsup_{x\to\infty}
\frac1x
\#\{n\le x:H_y(n)>\varepsilon\}
\ll
\frac1{\varepsilon y}.
\]
\end{proof}

\begin{lemma}\label{lem:continuous-distribution}
The arithmetic function
\[
\frac{\psi(n)}n
\]
has a continuous limiting distribution.
\end{lemma}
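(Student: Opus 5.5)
The plan is to apply the Erdős–Wintner theorem to the additive function $f(n)=\log(\psi(n)/n)$ and then invoke the Erdős purity criterion for continuity. Since $\psi$ is multiplicative and $\psi(p^a)/p^a=1+1/p$ for every $a\ge1$, $f$ is strongly additive with $f(p^a)=f(p)=\log(1+1/p)$. The Erdős–Wintner theorem gives a limiting distribution for $f$ provided three series converge:
\[
\sum_{|f(p)|>1}\frac1p,\qquad
\sum_{|f(p)|\le1}\frac{f(p)}p,\qquad
\sum_{|f(p)|\le1}\frac{f(p)^2}p.
\]
Here $0<f(p)\le\log(3/2)<1$ for all $p$. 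The first series is therefore empty, and the other two are dominated by $\sum_p p^{-2}$ and $\sum_p p^{-3}$, so they converge. Hence $f$ has a limiting distribution $F$. Since $t\mapsto e^t$ is a continuous increasing bijection onto $(0,\infty)$, $\psi(n)/n$ then has the limiting distribution $F(\log t)$. Continuity of one is equivalent to continuity of the other.

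For continuity, I would use Lévy's purity theorem in the form due to Erdős. If the three series converge, the limit law is continuous if and only if
\[
\sum_{f(p)\ne0}\frac1p=\infty.
\]
Since $f(p)\ne0$ for every prime $p$, this is just the divergence of $\sum_p 1/p$, which follows from Mertens. This completes the proof, and it is exactly the content of the cited result \cite[Chapter~III.4, Theorem~1]{ref13}, so in the paper I would simply verify these hypotheses.

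The only delicate point is the continuity step, that is, knowing that no atoms arise. If I wanted a self-contained argument rather than quoting the purity theorem, I would write the limit law as the distribution of the infinite convolution
\[
\sum_p \log\!\Bigl(1+\frac1p\Bigr)X_p,
\]
where the $X_p$ are independent Bernoulli variables with $P(X_p=1)=1/p$. I would then apply the Jessen–Wintner purity law: a convergent infinite convolution of discrete distributions is pure, so it is either purely discrete or continuous. To show it has no atom, I would show that each value has probability zero. The largest atom of the partial convolution over $p\le z$ is bounded by $\prod_{p\le z}(1-1/p)$ times the number of representations. A cleaner route is the standard bound via Lévy's concentration inequality, using that $\max_p P(X_p=\text{mode})$ times the product of the non-mode probabilities tends to $0$, since $\prod_p(1-1/p)=0$. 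I expect this no-atom verification to be the main obstacle, and I would cite the theorem in the paper rather than reprove it.
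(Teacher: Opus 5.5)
Your proposal is correct and matches the paper's proof step for step. You use the same strongly additive $f(n)=\log(\psi(n)/n)$, and you check the three Erd\H{o}s--Wintner series in the same way, via $0<f(p)\le\log(3/2)<1$ and comparison with $\sum_p p^{-2}$ and $\sum_p p^{-3}$. You also invoke the same continuity criterion $\sum_{f(p)\ne0}1/p=\sum_p1/p=\infty$ and transfer the result to $\psi(n)/n$ through $t\mapsto e^t$.

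Your optional self-contained aside is not needed. Absence of atoms already means continuity, so the Jessen--Wintner purity step is superfluous. The L\'evy-type criterion you gesture at is simply $\prod_p\max_x P(f(p)X_p=x)=\prod_p(1-1/p)=0$. Since you would cite the theorem anyway, none of this affects the argument.
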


\begin{proof}
Set
\[
f(n)
=
\log\frac{\psi(n)}n
=
\sum_{p\mid n}\log\left(1+\frac1p\right).
\]
Then $f$ is strongly additive. Moreover,
\[
f(p)
=
\log\left(1+\frac1p\right)
\ll
\frac1p.
\]

We apply the Erd\H{o}s--Wintner theorem
\cite[Chapter~III.4, Theorem~1]{ref13},
with its truncation parameter equal to $1$.
Since
\[
0<f(p)\le\log\frac32<1
\]
for every prime $p$, we have
\[
\sum_{\substack{p\\|f(p)|>1}}\frac1p=0.
\]
Furthermore,
\[
\sum_p\frac{f(p)}p
\ll
\sum_p\frac1{p^2}
<\infty
\]
and
\[
\sum_p\frac{f(p)^2}{p}
\ll
\sum_p\frac1{p^3}
<\infty.
\]
Thus the conditions of the Erd\H{o}s--Wintner theorem are satisfied,
and hence $f$ possesses a limiting distribution.

Moreover,
\[
f(p)>0
\]
for every prime $p$. Therefore
\[
\sum_{\substack{p\\f(p)\ne0}}\frac1p
=
\sum_p\frac1p
=
\infty.
\]
By the continuity criterion in the same theorem, the limiting
distribution of $f$ is continuous.

Let $F$ denote this continuous limiting distribution function.
Since
\[
\frac{\psi(n)}n=e^{f(n)},
\]
for every $y>0$ we have
\[
\frac{\psi(n)}n\le y
\quad\Longleftrightarrow\quad
f(n)\le\log y.
\]
Hence the limiting distribution function of $\psi(n)/n$ is
\[
G(y)
=
\begin{cases}
0, & y\le0,\\[3pt]
F(\log y), & y>0.
\end{cases}
\]
Since every distribution function satisfies
\[
F(t)\longrightarrow0
\qquad(t\to-\infty),
\]
the function $G$ is continuous also at $y=0$. Therefore
$\psi(n)/n$ has a continuous limiting distribution.
\end{proof}

\section{Proof of Theorem~\ref{thm:second}}

\begin{proof}
Let $c_\ast$ be the constant occurring in
\cite[Lemma~2.3]{Guo}. We choose the constant $c$ in
\[
g(n)
=
c\,\frac{\log_2n}{\log_3n}
\]
so that
\[
0<c<c_\ast
\]
and so that the conclusions of
Lemmas~\ref{lem:old-large-primes} and
\ref{lem:new-large-primes} hold with this choice of $g$.

We first deal explicitly with the exceptional set arising from the
small-prime divisibility result.

For sufficiently large $x$, define
\[
\mathcal E_0(x)
:=
\left\{
n:\sqrt{x}<n\le x,\quad
\text{there exists a prime }p\le g(n)
\text{ such that }p\nmid\psi(n)
\right\}.
\]

We claim that
\begin{equation}
\#\mathcal E_0(x)
\ll
\frac{x}{(\log_3x)^2}.
\label{eq:psiT-small-exception}
\end{equation}
Indeed, if
\[
\sqrt{x}<n\le x,
\]
then, since $g(t)$ is increasing for all sufficiently large $t$,
\[
g(n)\le g(x)
=
c\,\frac{\log_2x}{\log_3x}
<
c_\ast\,\frac{\log_2x}{\log_3x}.
\]
Therefore every prime $p\le g(n)$ satisfies
\[
p<
c_\ast\,\frac{\log_2x}{\log_3x}.
\]
Taking $a=1$ in \cite[Lemma~2.3]{Guo}, we see that, apart from
\[
O\left(\frac{x}{(\log_3x)^2}\right)
\]
integers $n\le x$, every such prime $p$ divides $\psi(n)$.
This proves \eqref{eq:psiT-small-exception}.

Define also
\[
\mathcal E_{\mathcal H}(x)
:=
\left\{
n:\sqrt{x}<n\le x,\quad
\mathcal H(n)\ge
\frac{\log_3n}{\log_2n}
\right\},
\]
and
\[
\mathcal E_h(x)
:=
\left\{
n:\sqrt{x}<n\le x,\quad
h_1(n)\ge
\frac{\log_3n}{\log_2n}
\right\}.
\]
By Lemmas~\ref{lem:new-large-primes} and
\ref{lem:old-large-primes},
\[
\#\mathcal E_{\mathcal H}(x)=o(x),
\qquad
\#\mathcal E_h(x)=o(x).
\]

Put
\[
\mathcal E(x)
:=
\{n:n\le\sqrt{x}\}
\cup
\mathcal E_0(x)
\cup
\mathcal E_{\mathcal H}(x)
\cup
\mathcal E_h(x).
\]
Then
\begin{equation}
\#\mathcal E(x)=o(x).
\label{eq:psiT-total-exception}
\end{equation}

We now fix
\[
n\le x,
\qquad
n\notin\mathcal E(x).
\]
Then
\[
\mathcal H(n)
<
\frac{\log_3n}{\log_2n},
\qquad
h_1(n)
<
\frac{\log_3n}{\log_2n},
\]
and
\[
p\mid\psi(n)
\qquad
\text{for every prime }p\le g(n).
\]

We first compare the small prime divisors of $n$ and $T(n)$.
Since
\[
T(n)=\psi(n)-n,
\]
for every prime $p\le g(n)$ we have
\[
T(n)\equiv-n\pmod p.
\]
Consequently,
\begin{equation}
p\mid T(n)
\quad\Longleftrightarrow\quad
p\mid n
\qquad(p\le g(n)).
\label{eq:psiT-small-prime-equivalence}
\end{equation}
Thus $n$ and $T(n)$ have precisely the same prime divisors not
exceeding $g(n)$.

Recall that
\[
\frac{\psi(m)}m
=
\prod_{p\mid m}
\left(1+\frac1p\right).
\]
Therefore
\[
\frac{\psi(n)}n
=
\prod_{p\mid n}
\left(1+\frac1p\right)
\]
and
\[
\frac{\psi(T(n))}{T(n)}
=
\prod_{p\mid T(n)}
\left(1+\frac1p\right).
\]

By \eqref{eq:psiT-small-prime-equivalence}, the Euler factors
corresponding to all primes $p\le g(n)$ are identical in these two
products.

Define
\[
\mathcal P_+(n)
:=
\left\{
p>g(n):
p\mid T(n),\ p\nmid n
\right\},
\]
and
\[
\mathcal P_-(n)
:=
\left\{
p>g(n):
p\mid n,\ p\nmid T(n)
\right\}.
\]
The primes $p>g(n)$ which divide both $n$ and $T(n)$ also cancel.
Hence
\begin{equation}
\frac{\psi(T(n))/T(n)}
{\psi(n)/n}
=
\frac{
\displaystyle
\prod_{p\in\mathcal P_+(n)}
\left(1+\frac1p\right)}
{
\displaystyle
\prod_{p\in\mathcal P_-(n)}
\left(1+\frac1p\right)}.
\label{eq:psiT-euler-ratio}
\end{equation}

Taking logarithms in \eqref{eq:psiT-euler-ratio} and using
\[
\log(1+u)\le u
\qquad(u>0),
\]
we obtain
\[
\left|
\log
\frac{\psi(T(n))/T(n)}
{\psi(n)/n}
\right|
\le
\sum_{p\in\mathcal P_+(n)}\frac1p
+
\sum_{p\in\mathcal P_-(n)}\frac1p.
\]
By the definition of $\mathcal H(n)$,
\[
\sum_{p\in\mathcal P_+(n)}\frac1p
=
\mathcal H(n)
<
\frac{\log_3n}{\log_2n}.
\]
Moreover,
\[
\mathcal P_-(n)
\subseteq
\{p>g(n):p\mid n\},
\]
so that
\[
\sum_{p\in\mathcal P_-(n)}\frac1p
\le
h_1(n)
<
\frac{\log_3n}{\log_2n}.
\]
Consequently,
\begin{equation}
\left|
\log
\frac{\psi(T(n))/T(n)}
{\psi(n)/n}
\right|
<
2\frac{\log_3n}{\log_2n}.
\label{eq:psiT-log-bound}
\end{equation}

Recall that
\[
\delta(n)
=
\frac{\log_3n}{\log_2n}.
\]
Since $\delta(n)\to0$, there exists a real number $\theta_n$ such
that
\[
|\theta_n|<2\delta(n)
\]
and
\[
\frac{\psi(T(n))}{T(n)}
=
\frac{\psi(n)}n e^{\theta_n}.
\]

We now convert this multiplicative estimate into an additive one.
For this purpose, we first obtain an upper bound for $\psi(n)/n$.

By the Euler product,
\[
\frac{\psi(n)}n
=
\prod_{\substack{p\le g(n)\\p\mid n}}
\left(1+\frac1p\right)
\prod_{\substack{p>g(n)\\p\mid n}}
\left(1+\frac1p\right).
\]

For the large-prime part,
\[
\begin{aligned}
\log
\prod_{\substack{p>g(n)\\p\mid n}}
\left(1+\frac1p\right)
&=
\sum_{\substack{p>g(n)\\p\mid n}}
\log\left(1+\frac1p\right)\\
&\le
\sum_{\substack{p>g(n)\\p\mid n}}\frac1p\\
&=
h_1(n)
<
\delta(n).
\end{aligned}
\]
Therefore
\[
\prod_{\substack{p>g(n)\\p\mid n}}
\left(1+\frac1p\right)
<
e^{\delta(n)}
=
O(1).
\]

For the small-prime part,
\[
\prod_{\substack{p\le g(n)\\p\mid n}}
\left(1+\frac1p\right)
\le
\prod_{p\le g(n)}
\left(1+\frac1p\right).
\]
By the standard Mertens-type product estimate
\cite{ref9,ref14},
\[
\prod_{p\le y}
\left(1+\frac1p\right)
=
\frac{6e^\gamma}{\pi^2}\log y
\left(
1+O\left(\frac1{\log y}\right)
\right).
\]
Taking $y=g(n)$, we obtain
\[
\prod_{\substack{p\le g(n)\\p\mid n}}
\left(1+\frac1p\right)
\ll
\log g(n).
\]
Since
\[
g(n)
=
c\,\frac{\log_2n}{\log_3n},
\]
we have
\[
\log g(n)
=
\log_3n-\log_4n+O(1)
\asymp
\log_3n.
\]

Thus
\begin{equation}
\frac{\psi(n)}n
\ll
\log_3n.
\label{eq:psiT-psi-upper}
\end{equation}

We now return to
\[
\frac{\psi(T(n))}{T(n)}
=
\frac{\psi(n)}n e^{\theta_n}.
\]
It follows that
\[
\left|
\frac{\psi(T(n))}{T(n)}
-
\frac{\psi(n)}n
\right|
=
\frac{\psi(n)}n
\left|e^{\theta_n}-1\right|.
\]
Since
\[
|\theta_n|
<
2\frac{\log_3n}{\log_2n}
\longrightarrow0,
\]
we may assume that $|\theta_n|\le1$ for all sufficiently large $n$.
Using
\[
|e^u-1|\ll|u|
\qquad(|u|\le1),
\]
we obtain
\[
\left|e^{\theta_n}-1\right|
\ll
\frac{\log_3n}{\log_2n}.
\]
Consequently,
\[
\left|
\frac{\psi(T(n))}{T(n)}
-
\frac{\psi(n)}n
\right|
\ll
\frac{(\log_3n)^2}{\log_2n}.
\]

Finally,
\[
\frac{T(T(n))}{T(n)}
=
\frac{\psi(T(n))}{T(n)}-1
\]
and
\[
\frac{T(n)}n
=
\frac{\psi(n)}n-1.
\]
Therefore
\[
\left|
\frac{T(T(n))}{T(n)}
-
\frac{T(n)}n
\right|
=
\left|
\frac{\psi(T(n))}{T(n)}
-
\frac{\psi(n)}n
\right|
\ll
\frac{(\log_3n)^2}{\log_2n}
=
o(1).
\]
Hence
\[
\frac{T^{(2)}(n)}{T(n)}
=
\frac{T(n)}n+o(1).
\]
\end{proof}

\section{Proof of Theorem~\ref{thm:lower}}
\label{sec:lower-bound}

\begin{proof}
Put
\[
\rho(n)
:=
\frac{T(n)}n
=
R(n)-1.
\]

We first record a truncation estimate. For $y\ge2$,
\[
\frac{R(n)}{R_y(n)}
=
\prod_{\substack{p>y\\p\mid n}}
\left(1+\frac1p\right),
\]
and hence
\[
0
\le
\log\frac{R(n)}{R_y(n)}
\le
\sum_{\substack{p>y\\p\mid n}}\frac1p
=
H_y(n).
\]
Therefore, if $H_y(n)\le\theta$, then
\[
R_y(n)\ge R(n)e^{-\theta}.
\]

Let now $0<\varepsilon<1$ and $\eta>0$ be fixed. We first choose
$Y$ so as to keep $\rho(n)$ away from $0$. By the Mertens product
formula \cite{ref9},
\[
\prod_{p\le Y}\left(1-\frac1p\right)\longrightarrow0
\qquad(Y\to\infty),
\]
so $Y$ may be chosen sufficiently large that
\[
\prod_{p\le Y}\left(1-\frac1p\right)<\eta.
\]
Put
\[
M_Y
:=
\prod_{p\le Y}p.
\]
The integers having no prime divisor $p\le Y$ are precisely those
coprime to $M_Y$. Since $M_Y$ is fixed,
\[
\#\{n\le x:(n,M_Y)=1\}
=
\frac{\varphi(M_Y)}{M_Y}x+O(M_Y).
\]
Thus this set has asymptotic density
\[
\frac{\varphi(M_Y)}{M_Y}
=
\prod_{p\le Y}\left(1-\frac1p\right)
<
\eta.
\]

Consequently, outside a set of asymptotic density less than $\eta$,
$n$ has a prime divisor $p\le Y$. For such $n$,
\[
\rho(n)
=
R(n)-1
\ge
\frac1p
\ge
\frac1Y,
\]
and hence
\[
\frac{R(n)}{\rho(n)}
=
1+\frac1{\rho(n)}
\le
1+Y.
\]

We next choose $\theta>0$ sufficiently small that
\[
\theta(1+Y)<\varepsilon.
\]
The role of the cutoff $y$ is different: it is used to make the
large-prime tail $H_y(n)$ small. By
Lemma~\ref{lem:large-prime-tail},
\[
\limsup_{x\to\infty}
\frac1x
\#\{n\le x:H_y(n)>\theta\}
\ll
\frac1{\theta y}.
\]
Since $\theta$ is fixed, we may choose a fixed $y\ge Y$ sufficiently
large that
\[
\limsup_{x\to\infty}
\frac1x
\#\{n\le x:H_y(n)>\theta\}
<
\eta.
\]

Since $y$ is now fixed,
Lemma~\ref{lem:small-prime-persistence}, applied with $K=k$, gives,
outside a set of asymptotic density zero,
\[
p\mid T^{(j)}(n)
\quad\Longleftrightarrow\quad
p\mid n
\]
simultaneously for every prime $p\le y$ and every
$0\le j\le k$. Hence
\[
R_y\bigl(T^{(j)}(n)\bigr)
=
R_y(n),
\qquad
0\le j\le k,
\]
simultaneously.

Moreover, by Lemma~\ref{lem:congruence-persistence},
\[
T^{(j)}(n)>1,
\qquad
0\le j\le k,
\]
outside a set of asymptotic density zero. Hence all the successive
ratios considered below are well defined on the resulting good set.

Let $n$ satisfy all the preceding good conditions. Then, for every
$0\le j<k$,
\[
\begin{aligned}
\frac{T^{(j+1)}(n)}{T^{(j)}(n)}
&=
R\bigl(T^{(j)}(n)\bigr)-1\\
&\ge
R_y\bigl(T^{(j)}(n)\bigr)-1\\
&=
R_y(n)-1\\
&\ge
R(n)e^{-\theta}-1\\
&=
\rho(n)-R(n)(1-e^{-\theta}).
\end{aligned}
\]
Since
\[
1-e^{-\theta}\le\theta,
\]
we obtain
\[
\frac{T^{(j+1)}(n)}{T^{(j)}(n)}
\ge
\rho(n)-\theta R(n).
\]
Using
\[
\frac{R(n)}{\rho(n)}
\le
1+Y
\]
and
\[
\theta(1+Y)<\varepsilon,
\]
we get
\[
\begin{aligned}
\frac{T^{(j+1)}(n)}{T^{(j)}(n)}
&\ge
\rho(n)
\left(
1-\theta\frac{R(n)}{\rho(n)}
\right)\\
&\ge
\rho(n)\bigl(1-\theta(1+Y)\bigr)\\
&\ge
(1-\varepsilon)\rho(n)\\
&=
(1-\varepsilon)\frac{T(n)}n.
\end{aligned}
\]
This holds simultaneously for $0\le j<k$.

It remains to estimate the exceptional set. Let
\[
\mathcal E_{\mathrm{small}}
:=
\{n:p\nmid n\text{ for every prime }p\le Y\},
\]
and
\[
\mathcal E_{\mathrm{tail}}
:=
\{n:H_y(n)>\theta\}.
\]
Let $\mathcal E_{\mathrm{iter}}$ denote the union of the exceptional
sets arising from Lemmas~\ref{lem:small-prime-persistence} and
\ref{lem:congruence-persistence}. Then
\[
d(\mathcal E_{\mathrm{small}})<\eta,
\]
\[
\overline d(\mathcal E_{\mathrm{tail}})<\eta,
\]
and
\[
d(\mathcal E_{\mathrm{iter}})=0.
\]
Hence
\[
\overline d
\left(
\mathcal E_{\mathrm{small}}
\cup
\mathcal E_{\mathrm{tail}}
\cup
\mathcal E_{\mathrm{iter}}
\right)
\le
2\eta.
\]

For fixed $\varepsilon>0$, let $\mathcal B_\varepsilon$ be the set
of positive integers $n$ for which either one of the relevant
successive ratios
\[
\frac{T^{(j+1)}(n)}{T^{(j)}(n)},
\qquad
0\le j<k,
\]
is not defined, or
\[
\frac{T^{(j+1)}(n)}{T^{(j)}(n)}
<
(1-\varepsilon)\frac{T(n)}n
\]
for at least one $0\le j<k$.

The preceding argument gives
\[
\mathcal B_\varepsilon
\subseteq
\mathcal E_{\mathrm{small}}
\cup
\mathcal E_{\mathrm{tail}}
\cup
\mathcal E_{\mathrm{iter}},
\]
and therefore
\[
\overline d(\mathcal B_\varepsilon)
\le
2\eta.
\]
Although the auxiliary sets on the right depend on $\eta$, the set
$\mathcal B_\varepsilon$ does not. Since $\eta>0$ is arbitrary,
\[
\overline d(\mathcal B_\varepsilon)=0.
\]
Moreover,
\[
0\leq \underline d(\mathcal B_\varepsilon)
\leq \overline d(\mathcal B_\varepsilon)=0.
\]
Hence the asymptotic density of $\mathcal B_\varepsilon$ exists and
\[
d(\mathcal B_\varepsilon)=0.
\]

We now let
\[
\varepsilon=\frac1m,\qquad m=2,3,\ldots.
\]
For $m\ge2$, put
\[
\mathcal C_m:=\mathcal B_{1/m}.
\]
Then
\[
\mathcal C_m\subseteq\mathcal C_{m+1},
\]
and, by the preceding argument,
\[
d(\mathcal C_m)=0
\qquad(m\ge2).
\]
Hence, for each $m\ge2$, we may choose an integer $N_m$,
with
\[
N_2<N_3<\cdots,\qquad N_m\to\infty,
\]
such that
\[
\#\bigl(\mathcal C_m\cap[1,x]\bigr)
\le \frac{x}{m}
\qquad(x\ge N_m).
\]

For $n\ge N_2$, let $m(n)$ be the unique integer satisfying
\[
N_{m(n)}\le n<N_{m(n)+1},
\]
and define
\[
\mathcal G
:=
\{n\ge N_2:n\notin\mathcal C_{m(n)}\}.
\]
We claim that $\mathcal G$ has asymptotic density one.
Indeed, if
\[
N_M\le x<N_{M+1},
\]
then, since
\[
\mathcal C_2\subseteq\mathcal C_3\subseteq\cdots\subseteq\mathcal C_M,
\]
we have
\[
\#\bigl([1,x]\setminus\mathcal G\bigr)
\le
N_2+\#\bigl(\mathcal C_M\cap[1,x]\bigr)
\le
N_2+\frac{x}{M}.
\]
Since $M\to\infty$ as $x\to\infty$, it follows that
\[
d(\mathcal G)=1.
\]

Now let $n\in\mathcal G$. Since
\[
n\notin\mathcal C_{m(n)}
=\mathcal B_{1/m(n)},
\]
we have, simultaneously for every $0\le j<k$,
\[
\frac{T^{(j+1)}(n)}{T^{(j)}(n)}
\ge
\left(1-\frac1{m(n)}\right)\frac{T(n)}n.
\]
Since $m(n)\to\infty$ as $n\to\infty$, the function
\[
\xi(n):=\frac1{m(n)}
\]
satisfies $\xi(n)\to0$ on $\mathcal G$. Therefore
\[
\frac{T^{(j+1)}(n)}{T^{(j)}(n)}
\ge
(1-\xi(n))\frac{T(n)}n,
\qquad 0\le j<k,
\]
simultaneously for every $n\in\mathcal G$.

Therefore
\[
\frac{T^{(j+1)}(n)}{T^{(j)}(n)}
\ge
(1-o(1))\frac{T(n)}n,
\qquad
0\le j<k,
\]
simultaneously for almost all $n$. Since $k$ is fixed, the $o(1)$
term may be chosen uniformly in $0\le j<k$.

Finally, let $j\ge1$ be fixed. Applying the preceding result with
$k=j$, there exists a function $\xi_j(n)\to0$ such that, for almost
all $n$,
\[
\frac{T^{(r+1)}(n)}{T^{(r)}(n)}
\ge
(1-\xi_j(n))\frac{T(n)}n,
\qquad
0\le r<j.
\]
Multiplying these inequalities gives
\[
\begin{aligned}
\frac{T^{(j)}(n)}n
&=
\prod_{r=0}^{j-1}
\frac{T^{(r+1)}(n)}{T^{(r)}(n)}\\
&\ge
(1-\xi_j(n))^j
\left(\frac{T(n)}n\right)^j\\
&=
(1-o(1))
\left(\frac{T(n)}n\right)^j,
\end{aligned}
\]
since $j$ is fixed. Therefore
\[
T^{(j)}(n)
\ge
(1-o(1))\,n
\left(\frac{T(n)}n\right)^j.
\]
\end{proof}

\section{Proof of Corollary~\ref{cor:periodic}}

\begin{proof}
Let
\[
\mathcal P_\ell
:=
\{n\ge1:T^{(\ell)}(n)\text{ is defined and }T^{(\ell)}(n)=n\}.
\]
Thus $\mathcal P_\ell$ is the set of periodic points whose exact
period divides $\ell$.

To count the elements of $\mathcal P_\ell$, we first choose one
distinguished element from each periodic orbit. Define
\[
\mathcal M_\ell
:=
\left\{
m\in\mathcal P_\ell:
m=\min_{0\le j<\ell}T^{(j)}(m)
\right\}.
\]
Thus $\mathcal M_\ell$ consists of the smallest integers in the
periodic orbits represented in $\mathcal P_\ell$.

We first show that $\mathcal M_\ell$ has asymptotic density zero.
Fix
\[
\varepsilon\in(0,1).
\]
Let $E_{\varepsilon,\ell}$ denote the set of positive integers for
which one of the relevant successive ratios is undefined, or for
which
\[
\frac{T^{(j+1)}(n)}{T^{(j)}(n)}
<
(1-\varepsilon)\frac{T(n)}n
\]
for at least one
\[
0\le j<\ell.
\]
By Theorem~\ref{thm:lower}, applied with $k=\ell$,
\[
d(E_{\varepsilon,\ell})=0.
\]
Consequently, for every $n\notin E_{\varepsilon,\ell}$,
\[
\frac{T^{(j+1)}(n)}{T^{(j)}(n)}
\ge
(1-\varepsilon)\frac{T(n)}n,
\qquad
0\le j<\ell.
\]

Now let
\[
m\in
\mathcal M_\ell\setminus E_{\varepsilon,\ell}.
\]
By the definition of $\mathcal M_\ell$,
\[
T(m)\ge m
\]
and
\[
T^{(\ell-1)}(m)\ge m.
\]
Moreover, since $m\in\mathcal P_\ell$,
\[
T^{(\ell)}(m)=m.
\]
It follows that
\[
\frac{T(m)}m\ge1
\]
and
\[
\frac{T^{(\ell)}(m)}
{T^{(\ell-1)}(m)}
=
\frac{m}{T^{(\ell-1)}(m)}
\le1.
\]

Since $m\notin E_{\varepsilon,\ell}$, taking $j=\ell-1$ gives
\[
\frac{T^{(\ell)}(m)}
{T^{(\ell-1)}(m)}
\ge
(1-\varepsilon)\frac{T(m)}m.
\]
Therefore
\[
1
\ge
(1-\varepsilon)\frac{T(m)}m,
\]
and hence
\[
1
\le
\frac{T(m)}m
\le
\frac1{1-\varepsilon}.
\]
Since
\[
\frac{T(m)}m
=
\frac{\psi(m)}m-1,
\]
we obtain
\[
2
\le
\frac{\psi(m)}m
\le
2+\frac{\varepsilon}{1-\varepsilon}.
\]

Thus
\[
\mathcal M_\ell\setminus E_{\varepsilon,\ell}
\subseteq
\left\{
n:
2\le\frac{\psi(n)}n
\le
2+\frac{\varepsilon}{1-\varepsilon}
\right\}.
\]
Equivalently,
\[
\mathcal M_\ell
\subseteq
E_{\varepsilon,\ell}
\cup
\left\{
n:
2\le\frac{\psi(n)}n
\le
2+\frac{\varepsilon}{1-\varepsilon}
\right\}.
\]

By Lemma~\ref{lem:continuous-distribution}, $\psi(n)/n$ has a
continuous limiting distribution. Let $G$ denote its limiting
distribution function. Then, for every $\eta>0$,
\[
\left\{
n:
2\le\frac{\psi(n)}n\le2+\eta
\right\}
\subseteq
\left\{
n:
2-\eta<\frac{\psi(n)}n\le2+\eta
\right\}.
\]
Hence
\[
\limsup_{x\to\infty}
\frac1x
\#\left\{
n\le x:
2\le\frac{\psi(n)}n\le2+\eta
\right\}
\le
G(2+\eta)-G(2-\eta).
\]
Since $G$ is continuous at $2$,
\[
G(2+\eta)-G(2-\eta)
\longrightarrow0
\qquad(\eta\downarrow0).
\]

Using the preceding inclusion and
\[
d(E_{\varepsilon,\ell})=0,
\]
we obtain
\[
\begin{aligned}
\limsup_{x\to\infty}
\frac1x
\#\{m\le x:m\in\mathcal M_\ell\}
&\le
\limsup_{x\to\infty}
\frac1x
\#\left\{
n\le x:
2\le\frac{\psi(n)}n
\le
2+\frac{\varepsilon}{1-\varepsilon}
\right\}.
\end{aligned}
\]
Since
\[
\frac{\varepsilon}{1-\varepsilon}
\longrightarrow0
\qquad(\varepsilon\downarrow0),
\]
the continuity of $G$ at $2$ gives
\[
\limsup_{x\to\infty}
\frac1x
\#\{m\le x:m\in\mathcal M_\ell\}
=0.
\]
Therefore
\[
\#\{m\le x:m\in\mathcal M_\ell\}
=o(x).
\]

It remains to pass from the smallest elements of the periodic orbits
to all points of $\mathcal P_\ell$. If
$n\in\mathcal P_\ell$ and $r$ is its exact period, then
\[
r\mid\ell.
\]
Hence the periodic orbit of $n$ contains exactly
\[
r\le\ell
\]
distinct integers.

Suppose now that $n\le x$ belongs to such an orbit, and let $m$ be
the smallest integer in that orbit. Then
\[
m\le n\le x,
\]
so
\[
m\in\mathcal M_\ell,
\qquad
m\le x.
\]
Thus every periodic orbit contributing a point to
$\mathcal P_\ell\cap[1,x]$ has a representative in
$\mathcal M_\ell\cap[1,x]$, and each such orbit contains at most
$\ell$ distinct points. Therefore
\[
\#\{n\le x:T^{(\ell)}(n)=n\}
\le
\ell\,
\#\{m\le x:m\in\mathcal M_\ell\}
=
o(x).
\]

Thus $\mathcal P_\ell$ has asymptotic density zero. Since the set of
periodic points of exact period $\ell$ is a subset of
$\mathcal P_\ell$, it also has asymptotic density zero.
\end{proof}

\section{Higher Iterates}
\label{sec:higher}

Theorem~\ref{thm:second} suggests that the asymptotic stability of
the relative growth factor may persist through any fixed number of
iterations. This leads naturally to the following conjecture.

\begin{conjecture}\label{conj:higher}
Let $K$ be a fixed positive integer. Then, for almost all positive
integers $n$, simultaneously for all $0\le j<K$,
\[
\frac{T^{(j+1)}(n)}{T^{(j)}(n)}
=
\frac{T(n)}{n}+o(1).
\]
\end{conjecture}

\end{document}